\numberwithin{equation}{section}
\colorlet{refkey}{orange!20}
\colorlet{labelkey}{blue!30}
\newtheorem{theorem}{Theorem}[section]
\newtheorem{proposition}[theorem]{Proposition}
\newtheorem{lemma}[theorem]{Lemma}
\newtheorem{corollary}[theorem]{Corollary}
\newtheorem{conjecture}[theorem]{Conjecture}
\newtheorem*{question*}{Question}
\newtheorem{fact}[theorem]{Fact}
\theoremstyle{definition}
\newtheorem*{definition*}{Definition}
\theoremstyle{remark}
\newtheorem*{remark*}{Remark}
\newcommand{\abs}[1]{\left\lvert#1\right\rvert}
\newcommand{\norm}[1]{\left\lVert#1\right\rVert}
\newcommand{\uq}{\underline{q}}
\newcommand{\oq}{\overline{q}}
\newcommand{\ur}{\underline{r}}
\newcommand{\ovr}{\overline{r}}
\newcommand{\x}{\times}
\newcommand{\EE}{\mathbb{E}}
\newcommand{\RR}{\mathbb{R}}
\newcommand{\PP}{\mathbb{P}}
\newcommand{\cG}{\mathcal{G}}
\newcommand{\cP}{\mathcal{P}}
\newcommand{\LT}{\mathsf{LT}}
\newcommand{\UT}{\mathsf{UT}}
\newcommand{\BIP}{\mathrm{BIP}}
\author{Yufei Zhao}
\address{Department of Mathematics\\ MIT\\ Cambridge, MA 02139.}
\email{yufeiz@mit.edu}
\thanks{The author is supported by a Microsoft Research PhD Fellowship.}
\title{On the lower tail variational problem for random graphs}
\begin{document}

\maketitle

\begin{abstract}
  We study the lower tail large deviation problem for subgraph counts
  in a random graph. Let $X_H$ denote the number of copies of $H$ in
  an Erd\H{o}s-R\'enyi random graph $\mathcal{G}(n,p)$. We are
  interested in estimating the lower tail probability $\mathbb{P}(X_H \le
  (1-\delta) \mathbb{E} X_H)$ for fixed $0 < \delta < 1$.

  Thanks to the results of Chatterjee, Dembo, and Varadhan, this large
  deviation problem has been reduced to a natural variational problem
  over graphons, at least for $p \ge n^{-\alpha_H}$ (and conjecturally
  for a larger range of $p$). We study this variational problem and
  provide a partial characterization of the so-called ``replica
  symmetric'' phase. Informally, our main result says that for every
  $H$, and $0 < \delta < \delta_H$ for some $\delta_H > 0$, as $p \to
  0$ slowly, the main contribution to the lower tail probability comes
  from Erd\H{o}s-R\'enyi random graphs with a uniformly tilted edge
  density. On the other hand, this is false for non-bipartite $H$ and
  $\delta$ close to 1.
\end{abstract}

\section{Background} \label{sec:background}

We consider large deviations of subgraph counts in
Erd\H{o}s-R\'enyi random graphs. Fix a graph $H$, and let
$X_H$ denote the number of copies of $H$ in an Erd\H{o}s--R\'enyi
random graph $\cG(n,p)$. For a fixed $\delta > 0$, the problem is to
estimate the probabilities
\begin{align*}
  \text{(upper tail)} \quad & \PP(X_H \ge (1 + \delta) \EE X_H) \quad \text{and}\\
  \text{(lower tail)} \quad & \PP(X_H \le (1 - \delta) \EE X_H).
\end{align*}
This problem has a long history (see \cite{Cha12} and its
references). For the order of the logarithm of the tail probability, the upper tail
problem is considered more difficult and it was resolved only fairly
recently~\cite{Cha12,DK12}. We are now interested in the finer
question of determining the large deviation rate, or equivalently the
first order asymptotics of the logarithm of the tail
probability.


Chatterjee and Varadhan~\cite{CV11} (the dense setting, with $p$ constant)
and more recently Chatterjee and Dembo~\cite{CD} (the sparse setting,
with $p \to 0$ and $p \ge n^{-\alpha_H}$ for some $\alpha_H > 0$)
showed that this large deviation problem reduces to a natural variational
problems in the space of graphons, which are a certain type of graph
limits. We begin by reviewing this connection, and then we shift our attention to analyzing the variational problem.

The language of graph limits is used throughout our
discussion, so let us review some terminologies. We refer the
readers to the beautifully written monograph by Lov\'asz~\cite{Lov12}
or the original sources, e.g., \cite{BCLSV08,BCLSV12,LS06,LS07},
for more on the subject. A \emph{graphon} is a symmetric measurable
function $W \colon [0,1]^2 \to [0,1]$ (here symmetric means $W(x,y) =
W(y,x)$). We write $V(H)$ and $E(H)$ to mean the vertex and edge set
of a graph $H$, respectively, and $v(H) = \abs{V(H)}$ and $e(H) =
\abs{E(H)}$ to denote their cardinalities. For any graphs $H$ and $G$,
we write $\hom(H, G)$ to denote the number of graph homomorphisms from
$H$ to $G$. We denote by $t(H, G) := \hom(H, G) / v(G)^{v(H)}$ the
\emph{$H$-density} in $G$. The $H$-density of a graphon $W$ is
defined by (here $W$ could be $\RR$-valued):
\[
t(H, W) := \int_{[0,1]^{V(H)}} \prod_{ij \in E(H)} W(x_i,x_j)
\prod_{i \in V(H)} dx_i.
\]
As usual, $K_t$ denotes the complete graph on $t$ vertices. As an example, we have
\[
t(K_3, W) = \int_{[0,1]^3} W(x,y)W(x,z)W(y,z) \, dxdydz.
\]
The notion of \emph{cut distance} is mentioned a few times in this
paper, but it is not used in a substantial way, so we refer the readers to
\cite[Chapter 8]{Lov12}
for details.

We write
\[
I_p(x) := x \log \frac{x}{p} + (1-x) \log \frac{1-x}{1-p}
\]
for the relative entropy function. For any function $f$, we write
$\EE[f(W)] := \int_{[0,1]^2} f(W(x,y)) \, dxdy$.

\medskip

We begin with a review of what is known for upper tails. In the dense case, for
fixed $0 < p \le q < 1$, it was shown in \cite{CV11} that as $n \to \infty$,
\begin{equation} \label{eq:ldvar}
\log \PP( t(H, \cG(n,p)) \geq q^{e(H)} ) = - (1+o(1)) \frac{n^2}{2}
\UT_p(H, q)\, ,
\end{equation}
where $\UT_p(H, q)$, for any graph $H$, is given by the \emph{upper tail
  variational problem}:
\begin{equation} \label{eq:UTp}
  \UT_p(H, q) :=
  \left\{
  \begin{array}{l}
    \text{minimize } \EE[I_p(W)] \\
    \text{subject to } t(H, W) \geq q^{e(H)}.
  \end{array}\right.
\end{equation}
Here $W$ is taken over all graphons.  We shall use $\UT_p(H, q)$ to
refer to the variational problem as well as its value, i.e.,
$\min\{\EE[I_p(W)] : t(H,W) \geq q^{e(H)}\}$ (it is known that the
minimum is always attained by some $W$; see Lemma~\ref{lem:Lagrange}
below).

Furthermore, as shown in \cite[Theorem 3.1]{CV11}, the set of minimizing $W$ in $\UT_p(H, q)$ represents the most likely models for $\cG(n,p)$
conditioned on the rare event of $t(H, \cG(n,p)) \geq
q^{e(H)}$, in the sense that the random graph conditioned on this rare
event will be exponentially more likely to be close to the minimizing
set of $W$'s in terms of cut
distance. This motiviates the study of $\UT_p(H, q)$ and related
variational problems.

We currently have few tools for solving variational problems of the
type \eqref{eq:UTp}. Note that $W \equiv q$ alway satisfies the
constraint in \eqref{eq:UTp}. We focus on the basic question:
\emph{does the constant graphon $W \equiv q$ minimize $\UT_p(H, q)$}?
The answer depends on the graph $H$ and parameters $(p, q)$. For a
fixed $H$, we wish to determine for each $(p,q)$ whether $\UT_p(H, q)
= I_p(q)$ or $\UT_p(H, q) < I_p(q)$, and in the former case, whether
the constant function $W \equiv q$ is the unique minimizer\footnote{We
  identify graphons differing on a measure zero set, as well as up to
  a measure-preserving transformation on $[0,1]$, i.e., $W$ is
  identified with $W^\sigma(x,y) := W(\sigma(x),\sigma(y))$ where
  $\sigma \colon [0,1] \to [0,1]$ is measure-preserving.}. The
separation of these two cases can be illustrated via a phase diagram,
as in Figure~\ref{fig:K3}, by plotting the phases in the $(p,q)$-plane
according to
the behavior of $\UT_p(H,q)$.

The constant graphon $W \equiv q$ is the
limit of random graphs $\cG(n,q)$ as $n \to \infty$, so if it were the
unique minimizer of $\UT_p(H, q)$ then $\cG(n,p)$, conditioned on
having $H$-density at least $q^{e(H)}$, approaches the typical
$\cG(n,q)$ in cut distance; this is not the case when $W \equiv q$ is
not a minimizer. Borrowing language from statistical physics,
informally, when $W \equiv q$ is a minimizer we say that there is
\emph{replica symmetry}\footnote{There is a subtle issue of uniqueness
  of the minimizer. When the constant graphon $W \equiv q$ is the
  unique minimizer, $\cG(n,q)$ represents the most likely model for
  the conditioned random graph (in terms of cut metric). However, it
  may be the case that $W \equiv q$ is a non-unique minimizer (which
  provably does not happen for $\UT_p(K_3, q)$ but I suspect that it
  does happen
  for the corresponding lower tail problem $\LT_p(K_3, q)$). When
  there are multiple distinct minimizers to the variational problem,
  all minimizers give rise to the same exponential rate, but one
  minimizer might still dominate by a lower order $\exp(o(n^2))$
  factor, which I do not know how to discern purely from the
  variational problem.}, and otherwise there is \emph{symmetry
  breaking}.

In a previous paper with Lubetzky~\cite{LZ1}, we completely identified
the upper tail replica symmetric phase whenever
$H$ is a $d$-regular graph. The phase diagram depends only on $d$. The
diagram for $H = K_3$ is shown in Figure~\ref{fig:K3} in the upper
portion (i.e., $q > p$) of the diagram\footnote{The boundary curve for
  the upper tail phase diagram for $K_3$ is
  given by the equation $( 1 + (q^{-1} - 1)^{1/(1-2q)})p = 1$.}. The lower portion of the
diagram illustrates new results in paper concerning the
lower tail problem.

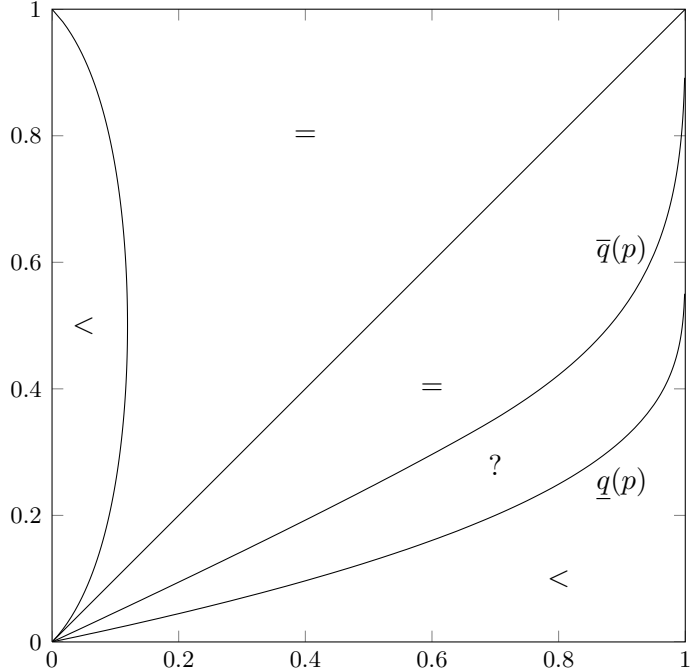
\begin{figure}
\begin{tikzpicture}
  \begin{axis}[
    xmin=0,xmax=1,ymin=0,ymax=1,
    height=10cm,width=10cm,
    ticklabel style={font=\footnotesize}
    ]
    \addplot[] table {plot-uppertailcurve.dat};
    \addplot[] table {plot-lowersymcurve.dat};
    \addplot[] table {plot-lowerbrkcurve.dat};
    \addplot[] coordinates {(0,0) (1,1)};
    \node[] at (axis cs:.4,.8) {$=$};
    \node[] at (axis cs:.05,.5) {$<$};
    \node[] at (axis cs:.6,.4) {$=$};
    \node[] at (axis cs:.7,.28) {$?$};
    \node[] at (axis cs:.8,.1) {$<$};
    \node[] at (axis cs:.9,.62) {$\overline{q}(p)$};
    \node[] at (axis cs:.9,.25) {$\underline{q}(p)$};
  \end{axis}
\end{tikzpicture}
  \caption{The phase diagram for triangle density upper tail
    variational problem
    $\UT_p(K_3, q)$ (when $q > p$) and lower tail variational
    problem $\LT_p(K_3, q)$ (when $q < p$).
    In regions marked ``$=$'', the constant graphon $W \equiv q$ is
    the unique minimizer to the variational problem. In regions marked
    ``$<$'', the constant graphon does not minimize the variational
    problem. The region marked ``?''\ is unresolved. The boundary curves $\oq$ and $\uq$ are from Theorem~\ref{thm:LT-K3}.
  }
  \label{fig:K3}
\end{figure}

\medskip

In this paper we study the corresponding lower tail variational
problem. For $0 \le q \le p \le 1$, let
\begin{equation} \label{eq:LTp}
  \LT_p(H, q) :=
  \left\{
  \begin{array}{l}
    \text{minimize } \EE[I_p(W)] \\
    \text{subject to } t(H, W) \leq q^{e(H)}.
  \end{array}\right.
\end{equation}
The connections between the large deviation problem and the variational
problem discussed earlier hold for the lower tail just as they do for the upper
tail. For example, as in~\eqref{eq:ldvar}, for fixed $0 \le
q \le p \le 1$, we have
\begin{equation} \label{eq:ldvar-LT}
\log \PP( t(H, \cG(n,p)) \le q^{e(H)} ) =
- (1+o(1)) \frac{n^2}{2}\LT_p(H, q).
\end{equation}

As observed in \cite{LZ1}, if $H$ is a bipartite graph satisfying
Sidorenko's conjecture~\cite{Sid93}, which asserts that $t(H, W) \geq
\EE[W]^{e(H)}$ for all graphons $W$, then from the constraint of
$\LT_p(H, q)$ we deduce $\EE[W] \leq q$. Since $I_p(\cdot)$ is a convex
function, from $\EE[W] \le q$ it follows that $W \equiv q$ is the
unique minimizer of $\LT_p(H, q)$. Sidorenko's conjecture remains open\footnote{The first
  unsettled case of Sidorenko's conjecture is for the graph $H$ being $K_{5,5}$ with a Hamiltonian cycle
  removed (this $H$ is sometimes called a ``M\"obius strip''). There is
  some sentiment in the community that Sidorenko's conjecture may be false
  for this graph.}, though it has been proved for certain families of
bipartite graphs $H$ such as trees, cycles, hypercubes, and bipartite
graphs containing one vertex adjacent to all vertices on the opposite
side \cite{CFS10,Hat10,KLL,LS,Sz14}. Even if Sidorenko's conjecture
were false, it could still be true that $W \equiv q$ minimizes
$\LT_p(H, q)$ for every bipartite $H$.

For the first non-bipartite case, namely $K_3$, new results in this
paper partially characterize the lower tail phase diagram, as depicted
in Figure~\ref{fig:K3}. The region marked ``?''\ remains
unresolved. For other non-bipartite graph $H$, it is possible to draw
similar partially identified phase diagrams using techniques in this
paper. We will pay special attention to the slopes of the boundary
curves at the origin.


The lower tail variational problem seems to be harder than the
corresponding upper tail problem\footnote{despite that the probabilistic problem of
  determining the order of $\log \PP(t(K_3, \cG(n,p)))$ when $p = p(n)
  \to 0$ came much later~\cite{Cha12,DK12} compared to the
  corresponding lower tail result~\cite{J90,JLR}.}. By analogy, for the classical extremal graph theory
problem of determining the range of possible triangle densities in a
graph of fixed edge density, the maximization problem (analogous to
upper tail) follows as a corollary of the classic
Kruskal--Katona theorem \cite{Kru63,Kat68}\footnote{The proof of the triangle upper tail result in
\cite{LZ1} actually uses some form of strengthening of the
Kruskal-Katona result, as we explain in Section~\ref{sec:UT}.},
whereas the corresponding minimization problem (analogous to lower tail)
was solved only relatively recently by
Razborov~\cite{Raz08} using his flag algebra machinery (also later solved
for $K_4$ by Nikiforov~\cite{Nik11} and all $K_t$ by
Reiher~\cite{Rei}). Furthermore, the qualitative nature of the phase
transition seems to be different for the upper tail and the lower
tail. It seems likely that the optimizing graphon $W$ changes continuously
as $(p,q)$ crosses upper tail phase boundary, while discontinuously for
the lower tail.

The sparse setting, with $p = p_n \to 0$ and $q/p$ kept constant, is
more difficult.  Using powerful new methods, Chatterjee and
Dembo~\cite{CD} showed that the large deviation problem in sparse
random graphs also reduces to the natural variational
problem\footnote{Some minor modifications needs to made to the
  formulation variational problem $\LT_p(H, q)$ in order to match the
  statements in \cite{CD}, namely that we only consider grpahons that
  correspond to weighted graphs on $n$ vertices. This difference is
  minor and does not affect the rest of this paper.}, provided that $p
\geq n^{-\alpha_H}$ for some explicit $\alpha_H > 0$. A similar
conclusion can be made about the lower tail variational problem using
their techniques. With Lubetzky~\cite{LZ2} we obtained the following
asymptotic solution to the corresponding variational problem: for
every fixed $\delta > 0$,
\begin{equation} \label{eq:UT-sparse}
\lim_{p\to 0} \frac{\UT_p(K_3, (1+\delta)^{1/3} p)}{p^2
  \log(1/p)} = \min\left\{\delta^{2/3}, \frac{2}{3} \delta\right\},
\end{equation}
and as a corollary, as long as $p = p_n \to 0$ with
$p_n \geq n^{-1/42} \log n$, we have
\[
\PP( t(K_3, \cG(n,p))\geq (1+\delta)p^3) = \exp\left( -(1 -
  o(1))\min\left\{\frac{\delta^{2/3}}{2},
    \frac{\delta}{3}\right\}n^2p^2\log\frac{1}{p} \right).
\]

In this paper, we also study the lower tail variational problem as $p
\to 0$. A nice feature of the lower tail problem in the sparse limit is
that instead of being concerned with the entire phase boundary curve,
we can focus on its slope at the origin.\footnote{For the
  upper tail boundary curve, the slope at the origin is always 1.}


The lower tail problem was also recently analyzed by Janson and Warnke
\cite{JW} from a completely different perspective (not relating to the
variational problem). In the triangle case, for $n^{-1/2} \ll p \to
0$, they were able to
determine the large deviation rate of
$\PP(t(K_3, \cG(n,p)) \le (1-\delta) p^3)$ for the two extremes
$\delta = o(1)$ and $\delta = 1 - o(1)$. They left as an open
question what happens for fixed $\delta \in (0,1)$, which is the subject
of this paper.

There are other variants of the variational problem being studied in
literature. For exponential random graphs, see
\cite{AR13,KY,CD11,LZ1,YRF,RY13,Yin13,Zhu14,AZ14a,YZ14a,YZ14b,Yin14}.
For the variational problem where several subgraph densities are
simultaneously constrained (e.g., edge and triangle densities both
fixed), see \cite{RS,RS13,KRRS,RRS14,AZ14b}.

\medskip

Section~\ref{sec:results} contains statements of the results.
Section~\ref{sec:UT} reviews the techniques used in proof of the
upper tail results from \cite{LZ1}. Section~\ref{sec:LT-K3} concerns
the upper tail problem for triangle densities. Section~\ref{sec:LT-H}
concerns general $H$-densities. The methods in
Sections~\ref{sec:LT-K3} and \ref{sec:LT-H} are different since the
 techniques for triangles seem to be quantitatively superior but
do not extend to all graphs.
Section~\ref{sec:end} concludes with some open problems.

\section{Results} \label{sec:results}

\subsection{Triangle density}

Here is our main result concerning the lower tail variational problem
$\LT_p(K_3,q)$ for triangle densities. See Figure~\ref{fig:K3}.

\begin{theorem}
  \label{thm:LT-K3}
  There exist functions $\uq, \oq \colon (0,1)
  \to (0,1)$ satisfying $0 < \uq(p) \leq \oq(p)
  \leq p$ for $0 < p < 1$ with the following properties. Whenever $\oq(p) < q < p$, the
  constant graphon $W \equiv q$ is the unique minimizer for
  $\LT_p(K_3, q)$. Whenever $0 < q < \uq(p)$, the
  constant graphon $W \equiv q$ does not minimize
  $\LT_p(K_3, q)$. Furthermore, $\lim_{p \to 0} \uq(p)/p = 0.209\dots$
  while $\lim_{p \to 0} \oq(p)/p = 0.466\dots$.
\end{theorem}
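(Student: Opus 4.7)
The plan is to establish the two directions (replica-symmetric and symmetry-breaking) separately, and then to pass to the sparse limit $p \to 0$ to extract the numerical slopes.

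For the replica-symmetric direction (building $\oq$), I would use Lagrange duality. Fix $q < p$ and set $\lambda := -I_p'(q)/(3q^2) > 0$, chosen so that $W \equiv q$ is a first-order critical point of $F_\lambda(W) := \EE[I_p(W)] + \lambda\, t(K_3, W)$. If one can prove the global inequality $F_\lambda(W) \ge F_\lambda(q)$ with equality iff $W \equiv q$, then for any graphon $W$ satisfying $t(K_3, W) \le q^3$ we obtain
\[
\EE[I_p(W)] \ge I_p(q) + \lambda (q^3 - t(K_3, W)) \ge I_p(q),
\]
and the constant graphon is the unique minimizer. The question reduces to a functional inequality whose validity depends on the size of $\lambda$, equivalently on $q/p$; the threshold $\oq(p)$ is the largest $q$ at which the inequality breaks down. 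The first-order condition for any minimizer of $F_\lambda$ is $I_p'(W(x,y)) = -3\lambda \int W(x,z) W(y,z)\, dz$, and for small $\lambda$ one expects (and must prove) that $W \equiv q$ is the only solution.

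For the symmetry-breaking direction (building $\uq$), I would construct an explicit competing graphon. The natural candidate is a two-block graphon $W_{s,d,e}$ taking value $d$ on the bipartite cross blocks $[0,s] \times [s,1] \cup [s,1]\times[0,s]$ and value $e$ on the same-side blocks $[0,s]^2 \cup [s,1]^2$. Both $t(K_3, W_{s,d,e})$ and $\EE[I_p(W_{s,d,e})]$ become explicit polynomial/rational expressions in $(s,d,e)$, so the problem reduces to a three-parameter optimization. One defines $\uq(p)$ as the supremum of $q$ for which some admissible $(s,d,e)$ strictly beats $W \equiv q$; the pure bipartite choice $(s,d,e) = (1/2, p, 0)$ is one natural special case, but the optimum may require genuinely nontrivial $e$.

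For the asymptotic slopes, I would rescale by writing $W = p \tilde W$ and $q = cp$. Using $I_p(cp)/p \to \psi(c) := c\log c - c + 1$ as $p \to 0$, and $t(K_3, W) = p^3 t(K_3, \tilde W)$, the lower tail variational problem reduces (after division by $p$) to minimizing $\int \psi(\tilde W)$ over nonnegative $\tilde W$ subject to $t(K_3, \tilde W) \le c^3$. The constants $\lim \oq(p)/p = 0.466\ldots$ and $\lim \uq(p)/p = 0.209\ldots$ then emerge as the critical values of $c$ in this sparse-limit problem, determined respectively by (i) the $c$ at which the Lagrangian inequality $F_\lambda \ge F_\lambda(q)$ becomes sharp, and (ii) the $c$ at which the optimal block-graphon construction ceases to beat uniform; both are roots of explicit transcendental equations.

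The main obstacle will be the replica-symmetric step. Proving $F_\lambda(W) \ge F_\lambda(q)$ globally is an infinite-dimensional functional inequality, and local second-variation analysis is insufficient on its own because of far-from-constant competitors such as the fully bipartite graphon. The proof will likely need an extremal-graph-theoretic reduction --- analogous to the way the upper-tail analysis of~\cite{LZ1} invokes a Kruskal--Katona-type input --- to cut the space of candidate minimizers down to a tractable finite-dimensional family of block graphons, at which point the remaining calculation becomes a delicate but manageable convex optimization.
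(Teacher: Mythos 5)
Your symmetry-breaking half is essentially the paper's argument: the paper restricts to exactly your two-block family (with $s=1/2$, called $\BIP_{a,b}$ there), computes $t(K_3,\BIP_{a,b})=\tfrac14 a^3+\tfrac34 ab^2$ and $\EE[I_p(\BIP_{a,b})]=\tfrac12 I_p(a)+\tfrac12 I_p(b)$, and obtains $0.209\dots$ from the resulting one-parameter family after setting $b$ to saturate the constraint. One detail you should not gloss over: defining $\uq(p)$ as a supremum does not by itself show that \emph{every} $q$ below it exhibits symmetry breaking; in the sparse limit the paper proves this monotonicity by an explicit rescaling $(a,b)=(sa_1,sb_1)$, $s=r/r_1$, exploiting the identity $h(sx)=sh(x)+(s\log s)x-s+1$ together with $(a_1+b_1)/2>r_1$.

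The replica-symmetric half, however, has a genuine gap, and it is precisely the step you flag as the main obstacle. Your Lagrangian framing with $\lambda=-I_p'(q)/(3q^2)$ correctly identifies $W\equiv q$ as a critical point, but you do not supply any mechanism for the global inequality $F_\lambda(W)\ge F_\lambda(q)$, and your proposed substitute --- an extremal-graph-theoretic reduction to a finite-dimensional family of block graphons --- is not available and is not what the paper does. The paper's key idea is to convert the constraint $t(K_3,W)\le q^3$ into a \emph{pointwise-exploitable} lower bound via two inequalities: a shifted form of Goodman's inequality ($U+W\ge 2q$ implies $t(K_3,U)+t(K_3,W)\ge 2q^3$), applied with $U=(2q-W)_+$ to get $t(K_3,(2q-W)_+)\ge q^3$, followed by the generalized H\"older inequality $t(K_3,V)\le\EE[V^2]^{3/2}$ to get $\EE[(2q-W)_+^2]\ge q^2$. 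At that point the problem collapses to the single-variable tangent-line inequality
\[
I_p(x) \;\ge\; I_p(q) + \frac{-I_p'(q)}{2q}\bigl((2q-x)_+^2 - q^2\bigr), \qquad x\in[0,p],
\]
which is verified by elementary convexity analysis and whose validity region yields the implicit equation $I_p(q)+\tfrac12 qI_p'(q)=0$ for $\oq$, hence the slope $0.466\dots$ solving $\tfrac32 r\log r - r+1=0$. Without this Goodman--H\"older reduction (or some replacement for it), your plan for the replica-symmetric phase does not close; the difficulty is exactly that $t(K_3,W)$ is a three-point correlation that cannot be bounded below pointwise, whereas $\EE[(2q-W)_+^2]$ can be. Note also that this method gives only a sufficient condition for replica symmetry --- the region between $\uq$ and $\oq$ is left open in the paper --- so your description of $\oq(p)$ as the exact threshold where the Lagrangian inequality fails overstates what is proved.
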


The two curves $\uq(p)$ and $\oq(p)$ are drawn in
Figure~\ref{fig:K3}. The nature of $\LT_p(K_3, q)$ remains unresolved
for $(p,q)$ between these two curves.

In Theorem~\ref{thm:LT-K3} and elsewhere, $0.466\dots$ denotes the
unique $0 < r < 1$ satisfying $\frac 32 r \log r - r + 1 =0$, and
$0.209\dots$ is defined as the maximum
value of $r < 1$ such that that the function $f_r(x)$ in
\eqref{eq:K3-h-f-BIP} (also see Figure~\ref{fig:K3-h-f-BIP}) has a zero in the open interval $(0, r)$.

\subsection{General subgraph density}

We extend Theorem~\ref{thm:LT-K3} to general subgraph
counts. No serious effort is made here at optimizing the
quantitative bounds.

\begin{theorem}
  \label{thm:LT-H}
  Let $H$ be a graph. There exists a function $\oq \colon (0,1) \to
  (0,1)$ with $\lim_{p \to 0} \oq(p)/p < 1$ such that whenever $\oq(p) < q \leq p$, the constant graphon $W
  \equiv q$ is the unique minimizer for $\LT_p(H, q)$.

  Furthermore, if $H$ is not bipartite, then there exists a function $\uq \colon (0,1)
  \to (0,1)$ with $\lim_{p \to 0} \uq(p)/p > 0$ such that whenever $0 \leq q < \uq(p)$, the
  constant graphon $W \equiv q$ does not minimize
  $\LT_p(K_3, q)$.
\end{theorem}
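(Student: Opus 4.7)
The plan handles the two boundaries separately. For the lower boundary $\uq$ in the non-bipartite case, I would exhibit an explicit bipartite competitor: the block graphon $W_r(x,y) = r\cdot\one[\{x<\tfrac12\}\neq\{y<\tfrac12\}]$. Because $H$ is non-bipartite, $t(H, W_r) = 0$, so the constraint is automatically satisfied, and $\EE[I_p(W_r)] = \tfrac12 I_p(r) + \tfrac12 I_p(0)$, minimized over $r$ at $r = p$ to give $\tfrac12\log\tfrac{1}{1-p}$. Hence $W\equiv q$ fails to minimize whenever $I_p(q) > \tfrac12\log\tfrac{1}{1-p}$; in the sparse limit $p\to 0$ with $q=cp$ this reduces to the explicit inequality $c\log c + 1 - c > \tfrac12$, which holds for all $c$ below a universal constant $c^\sharp \in (0,1)$. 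Taking $\uq(p)$ to be the corresponding threshold yields $\lim_{p\to 0}\uq(p)/p = c^\sharp > 0$.

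For the upper boundary $\oq$, the first step is to establish strict local minimality of $W\equiv q$ by a second-order expansion. Writing $W = q + U$ with $\alpha := \EE U$, $V := U - \alpha$, and $d_V(y) := \int_0^1 V(x,y)\,dx$, direct computation gives
\begin{align*}
\EE[I_p(W)] - I_p(q) &= I_p'(q)\alpha + \tfrac12 I_p''(q)\EE[U^2] + O(\|U\|_\infty^3), \\
t(H,W) - q^{e(H)} &= e(H)q^{e(H)-1}\alpha + q^{e(H)-2}\Bigl(\binom{e(H)}{2}\alpha^2 + S_H \int_0^1 d_V(y)^2\,dy\Bigr) + O(\|U\|_\infty^3),
\end{align*}
where $S_H := \sum_{v\in V(H)}\binom{\deg_H(v)}{2}$. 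Because $I_p'(q)\le 0$ for $q\le p$, feasibility forces $\alpha$ nonpositive at leading order: either $\alpha<0$ (in which case $I_p'(q)\alpha \ge 0$), or $\alpha=O(\|U\|^2)$ together with $d_V\equiv 0$ (from the quadratic part of the constraint). In either scenario the strong-convexity gain $\tfrac12 I_p''(q)\EE[U^2]$ is strictly positive unless $U\equiv 0$, yielding strict local minimality of $W\equiv q$ for every $q\le p$.

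To globalize, I would pass to the sparse scaling $q=cp$, $p\to 0$ and rescale $\widetilde W := W/p$, under which the variational problem becomes: minimize $\EE[\widetilde W\log\widetilde W + 1 - \widetilde W]$ over $[0,\infty)$-valued graphons subject to $t(H,\widetilde W)\le c^{e(H)}$, with $\widetilde W\equiv c$ the candidate minimizer. I would combine the rescaled local strict minimality with Jensen plus convexity of $\phi(x)=x\log x + 1 - x$ (to rule out competitors with $\EE\widetilde W$ far from $c$), compactness of graphon space in cut metric, and the quadratic Sidorenko defect from $\int d_V^2$ (to rule out competitors with $\EE\widetilde W$ near $c$ but $\widetilde W$ non-constant). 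The output is a threshold $c^\flat < 1$ such that for $c\in(c^\flat,1]$ the constant $\widetilde W\equiv c$ is the unique minimizer, giving $\limsup_{p\to 0}\oq(p)/p \le c^\flat < 1$. The main technical obstacle is the intermediate regime where $\EE\widetilde W$ lies strictly between $c$ and the mirror point $c^\star > 1$: Jensen alone is insufficient, and the naive local expansion does not extend past the radius where $\|\widetilde W - c\|_\infty \ll c$. The quantitative value of $c^\flat$ is not optimized, consistent with the remark preceding the theorem.
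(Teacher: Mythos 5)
The symmetry-breaking half of your argument is correct and is essentially the paper's: your competitor $W_r$ with $r=p$ is the graphon $\BIP_{0,p}$, it has $t(H,\cdot)=0$ for non-bipartite $H$, and in the sparse scaling the comparison $h(c)>\tfrac12$ gives a threshold $c^\sharp=0.186\dots$, exactly as in the paper's Proposition in Section~\ref{sec:LT-H}. No issues there.

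The replica-symmetry half has a genuine gap, and you have named it yourself. The second-order expansion around $W\equiv q$ only yields \emph{strict local} minimality for perturbations with $\norm{U}_\infty$ small, whereas the theorem asserts that $W\equiv q$ is the \emph{global} minimizer of $\LT_p(H,q)$. Your proposed globalization (Jensen for competitors with $\EE\wt W$ outside $[c,c^\star]$, the ``quadratic Sidorenko defect'' for competitors near the constant) leaves open precisely the regime $c<\EE\wt W<c^\star$ with $\wt W$ far from constant in $L^\infty$; since Sidorenko-type lower bounds on $t(H,\wt W)$ in terms of $\EE\wt W$ are false for non-bipartite $H$ (bipartite-like competitors have large edge density and tiny $H$-density), this regime cannot be dismissed, and the term $\int d_V^2$ is a perturbative quantity that gives no control there. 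As written, the argument proves a strictly weaker statement than the theorem.

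The paper closes this gap by a genuinely global argument that avoids perturbation theory altogether. Working in the sparse limit $\LT(H,r)$: (i) a minimizer $W$ exists by compactness and satisfies the first-order optimality (Lagrange) condition $h'(W)+\lambda t'(H,W)=0$ a.e.\ (Lemma~\ref{lem:Lagrange}); (ii) since $t'(H,W)\le m$ pointwise and $\tfrac1m\EE[t'(H,W)W]=t(H,W)$ must equal $r^m$ at a minimizer, the Lagrange condition forces the pointwise lower bound $W\ge r^{mr^{-m}}$ a.e.\ (Lemma~\ref{lem:W-lb}); (iii) Jensen applied to $\log$ gives $\EE[\log W]\le\log r$ for \emph{every} $H$ (Lemma~\ref{lem:logW}), which substitutes for the unavailable Sidorenko inequality; (iv) the tangent-line inequality $h(x)\ge h(r)+rh'(r)(\log x-\log r)$, valid for all $x\ge r^{mr^{-m}}$ once $r\ge r_m$, then yields $\EE[h(W)]\ge h(r)$ upon taking expectations. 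Steps (ii)--(iv) are the ideas missing from your outline; if you want to complete your proof along the paper's lines, the key realization is that the constraint controls $\EE[\log W]$ (not $\EE[W]$), so one should linearize $h$ in the variable $\log x$ rather than $x$, and that the a priori lower bound on the minimizer from the Euler--Lagrange equation is what makes this linearization globally valid.
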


The proof of the triangle case, Theorem~\ref{thm:LT-K3}, makes use of Goodman's inequality \cite{Goo59}:
\[
t(K_3, W) + t(K_3, 1 - W) \geq 1/4.
\]
If $H$ satisfies $t(H, W) + t(H, 1-W) \geq 2^{-e(H)+1}$ for all
graphons $W$ (such a graph $H$ is sometimes called ``common'' in the
context of Ramsey multiplicities), then the same method can be used to
establish regions where $W \equiv q$ is a minimizer of $\LT_p(H, q)$
(though the actual regions will not be the same as in
Figure~\ref{fig:K3} due to other technical reasons). However, $t(H, W)
+ t(H, 1-W) \geq 2^{-e(H)+1}$ does not hold in general. For example, Thomason~\cite{Tho89} showed that $K_t$ is a counterexample
for all $t\ge 4$. Consequently, the proof method of
Theorem~\ref{thm:LT-K3} does not seem to extend to all $H$.
Theorem~\ref{thm:LT-H} for general $H$ is proved using a different
method, which seems quantitatively inferior to the method for
triangles.

For bipartite $H$, I conjecture that there is no phase transition:

\begin{conjecture} \label{conj:LT-bipartite}
  Let $H$ be a bipartite graph. Then the constant function $W \equiv
  q$ is always the unique minimizer of $\LT_p(H, q)$.
\end{conjecture}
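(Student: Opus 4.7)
The plan is to combine the Sidorenko-based reduction already sketched in the paper with a Lagrange multiplier analysis that tries to handle those bipartite $H$ for which Sidorenko's conjecture is still open.

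First, recall the Sidorenko reduction. If $t(H, W) \ge \EE[W]^{e(H)}$ for every graphon $W$, then the constraint $t(H, W) \le q^{e(H)}$ forces $\EE[W] \le q$, and the strict convexity of $I_p$ together with its strict monotone decrease on $[0, p]$ gives
$$\EE[I_p(W)] \ge I_p(\EE[W]) \ge I_p(q),$$
with equality throughout only when $W \equiv q$. So the substantive content of Conjecture~\ref{conj:LT-bipartite} lies in bipartite $H$ for which Sidorenko remains open, beginning with the M\"obius strip $K_{5,5}$ minus a Hamiltonian cycle.

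Second, for such $H$, I would start from a minimizer $W^*$ (guaranteed by Lemma~\ref{lem:Lagrange}) and exploit the first-order KKT condition. This reads, almost everywhere,
$$\log \frac{W^*(x,y)(1-p)}{(1-W^*(x,y))\,p} \;=\; -\lambda\, D_{xy}(W^*),$$
where $\lambda \ge 0$ is the Lagrange multiplier and $D_{xy}(W^*)$ is the (symmetrized) functional derivative of $t(H, \cdot)$ at $W^*$, obtained as a sum over edges $ij \in E(H)$ of the homomorphism density of $H$ with edge $ij$ deleted and its two endpoints pinned to $(x,y)$. A brief case analysis rules out $\lambda = 0$, which would force $W^* \equiv p$; so the constraint saturates, $t(H, W^*) = q^{e(H)}$, and $W^* \le p$ almost everywhere. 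The goal is then to leverage the positive multilinear structure of $D_{xy}(W^*)$ via bipartiteness and Cauchy--Schwarz/H\"older manipulations in the spirit of existing partial progress on Sidorenko's conjecture \cite{CFS10,Hat10,KLL,LS,Sz14}, to upgrade the stationarity equation into a rigidity statement and conclude $W^* \equiv q$.

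Third, the main obstacle is that Conjecture~\ref{conj:LT-bipartite} is essentially at least as strong as a local version of Sidorenko's conjecture near constant graphons: if Sidorenko were to fail at some graphon $W_0$ close to a constant, then picking $q$ slightly below $\EE[W_0]$ and $p$ slightly above would make $W_0$ a feasible competitor whose entropy $\EE[I_p(W_0)]$ is strictly below $I_p(q)$, contradicting the conjecture. Consequently, any fully general proof must implicitly establish a neighborhood version of Sidorenko, which is presumably the real bottleneck. A realistic intermediate target, which my plan would settle, is to verify Conjecture~\ref{conj:LT-bipartite} unconditionally for every $H$ in the classes where Sidorenko's conjecture is already known (trees, even cycles, hypercubes, bipartite graphs with a dominating vertex on one side), and to leave the remaining cases conditional on Sidorenko.
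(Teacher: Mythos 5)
The statement you are addressing is stated in the paper as a conjecture and is not proved there: the paper only records the observation (from \cite{LZ1}, repeated after the statement of Conjecture~\ref{conj:LT-bipartite}) that the conclusion follows for any $H$ satisfying Sidorenko's conjecture, via exactly the Jensen/convexity argument in your first paragraph. That part of your proposal is correct but is not new content. The genuine gap is your second step. After invoking a minimizer and the stationarity condition (note that Lemma~\ref{lem:Lagrange} is stated for the sparse functional $\LT(H,r)$ with $h$, so you would first need its routine $I_p$-analogue), you say the goal is to ``upgrade the stationarity equation into a rigidity statement'' using bipartiteness and Cauchy--Schwarz. No argument is given for this step, and none is known: for the bipartite graphs where Sidorenko's conjecture is open (such as $K_{5,5}$ minus a Hamiltonian cycle), no manipulation of the functional derivative is known to force a stationary point to be constant. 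So the proposal proves only the conditional statement already in Section~\ref{sec:background}, and the unconditional conjecture remains open, as the paper says.

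A secondary issue is the reduction in your third paragraph. If Sidorenko fails at $W_0 = c + \epsilon U$ with $\EE[U]=0$, the density deficit is of order $\epsilon^2$ (the first-order term vanishes), so the admissible $q$ satisfies $c - q = \Theta(\epsilon^2)$, while the entropy surcharge $\EE[I_p(W_0)] - I_p(c)$ is of order $\epsilon^2\,\EE[U^2]$. With $p$ only ``slightly above'' $\EE[W_0]$, as you propose, one has $I_p(q) - I_p(c) = O(\epsilon^2 |I_p'(c)|)$ with $|I_p'(c)|$ small, so $W_0$ is typically a \emph{worse} competitor than the constant $q$ and no contradiction arises; this is precisely why the paper remarks that Conjecture~\ref{conj:LT-bipartite} could hold even if Sidorenko's conjecture fails. (Your implication can be repaired by instead taking $p$ close to $1$, so that $|I_p'(c)|$ is large, but as written the quantifiers are wrong.) In short: the conditional cases you list at the end are correct and already known; the unconditional statement is not established by this proposal.
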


As mentioned in the introduction, the conjecture holds for any $H$
for which Sidorenko's conjecture is true, i.e., $t(H, W) \geq
\EE[W]^{e(H)}$ for all graphons
$W$. Conjecture~\ref{conj:LT-bipartite} may be true even if
Sidorenko's conjecture were not.

\subsection{Sparse limit}

Since $I_p$ is decreasing in $[0,p]$ and increasing in
$[p,1]$, any minimizing $W$ for $\LT_p(H, q)$ satisfies $0 \leq W
\leq p$ almost everywhere. Let
\[
h(x) := x \log x - x + 1
\]
so that
\[
\lim_{p\to 0} p^{-1} I_p(px) = h(x)
\]
uniformly for $x \in
[0,1]$. It follows that for every graph $H$ and $0 \leq r \leq 1$ we have
\begin{equation} \label{eq:limLT}
\lim_{p \to 0} p^{-1} \LT_p(H, pr) = \LT(H, r)
\end{equation}
where
\begin{equation} \label{eq:LT}
  \LT(H, r) :=
  \left\{
  \begin{array}{l}
    \text{minimize } \EE[h(W)] \\
    \text{subject to } t(H, W) \leq r^{e(H)}.
  \end{array}\right.
\end{equation}
It would be interesting to solve this variational problem. As before, a basic
question is whether the constant function $W
\equiv r$ is a minimizer. Here is the main conjecture.

\begin{conjecture} \label{conj:h} Let $H$ be a non-bipartite graph and
  $0 \le r \le 1$. There exists a
  $0<r_{H}^*<1$ so that $W \equiv r$ minimizes $\LT(H, r)$ if and only
  if $r \geq r_H^*$. Furthermore, $W$ is the unique minimizer for
  $\LT(H, r)$ if and only if $r > r_H^*$.
\end{conjecture}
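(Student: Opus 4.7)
The plan is to define
\[
r_H^* := \sup \setcond{r \in [0,1]}{W \equiv r \text{ does not minimize } \LT(H,r)}
\]
(with $\sup\emptyset := 0$) and verify that this $r_H^*$ has all the claimed properties. Non-triviality $0 < r_H^* < 1$ follows from Theorem~\ref{thm:LT-H} combined with the sparse limit \eqref{eq:limLT}: $\lim_{p \to 0} \oq(p)/p < 1$ forces $W \equiv r$ to minimize $\LT(H,r)$ for $r$ close to $1$, while for non-bipartite $H$, $\lim_{p \to 0} \uq(p)/p > 0$ rules out the constant as a minimizer for $r$ close to $0$.

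The main engine is a one-line scaling identity. Expanding $h(\alpha x) = \alpha x \log \alpha + \alpha h(x) - \alpha + 1$ gives, for any graphon $W$ and any $\alpha \in (0,1]$,
\[
\EE[h(\alpha W)] - h(\alpha r) = \alpha(\log \alpha)(\EE[W] - r) + \alpha(\EE[h(W)] - h(r)).
\]
Suppose $W$ is feasible for $\LT(H,r)$ with $\EE[h(W)] < h(r)$. Since $h$ is strictly convex and strictly decreasing on $[0,1]$, Jensen's inequality $h(\EE[W]) \le \EE[h(W)] < h(r)$ forces $\EE[W] > r$. For any $\alpha \in (0,1)$ both terms on the right of the identity are then negative, so $\alpha W$ strictly improves over the constant at $r' := \alpha r$; since $t(H,\alpha W) = \alpha^{e(H)} t(H,W) \le (r')^{e(H)}$, it also remains feasible. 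This monotonicity makes $S := \setcond{r \in [0,1]}{W \equiv r \text{ minimizes } \LT(H,r)}$ upward closed in $[0,1]$, hence $S \supseteq (r_H^*, 1]$; moreover $r_H^* \in S$ since for any $W$ feasible at $r_H^*$ and any $r_n \downarrow r_H^*$, $W$ is also feasible at $r_n$, giving $\EE[h(W)] \ge h(r_n) \to h(r_H^*)$. So $S = [r_H^*, 1]$, which is the ``$W \equiv r$ minimizes $\LT(H,r)$ iff $r \ge r_H^*$'' half of the conjecture. Uniqueness for $r > r_H^*$ comes from the same identity: if a non-constant $W$ also minimized at such $r$, then $\EE[h(W)] = h(r)$ and the strict Jensen inequality (strict because $W$ is non-constant and $h$ is strictly convex) gives $\EE[W] > r$; picking $\alpha \in (0,1)$ close enough to $1$ that $\alpha r > r_H^*$, the identity yields $\EE[h(\alpha W)] < h(\alpha r)$, contradicting $\alpha r \in S$.

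The main obstacle is to establish non-uniqueness at the threshold $r = r_H^*$ itself: one must exhibit a non-constant minimizer of $\LT(H, r_H^*)$. The natural attempt is to take a limit of improving graphons $W_r$ as $r \uparrow r_H^*$, but $\EE[h(\cdot)]$ is not continuous in the cut metric (the natural topology in which graphons are compact), so a direct compactness argument does not suffice. A plausible route is a Taylor-plus-bifurcation analysis at $W \equiv r_H^*$: combining the first- and second-order expansions of $\EE[h(\cdot)]$ and $t(H, \cdot)$ with the feasibility and improvement inequalities should yield an a priori bound $\norm{W_r - r_H^*}_2 = O(r_H^* - r)$, after which one rescales $\psi_r := (W_r - r_H^*) / \norm{W_r - r_H^*}_2$ to unit $L^2$ norm and passes to a weak $L^2$ limit $\psi^*$. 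If $\psi^* \ne 0$, one would show that the KKT Lagrangian $\EE[h(\cdot)] + \lambda^* t(H,\cdot)$ at $W \equiv r_H^*$, with $\lambda^* = -\log(r_H^*)/(e(H)(r_H^*)^{e(H)-1})$, is degenerate along $\psi^*$, and construct the second minimizer as a finite-amplitude continuation of this direction. Ruling out the concentration case $\psi^* = 0$ and producing the non-trivial global continuation at the bifurcation point is the genuine difficulty, and is presumably what keeps the statement conjectural.
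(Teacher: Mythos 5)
This statement is Conjecture~\ref{conj:h}; the paper offers no proof and explicitly states that it remains open for every non-bipartite $H$, so there is nothing to compare your argument against except the paper's partial results. That said, the scaling argument you give is correct as far as it goes, and it does establish a genuine piece of the conjecture. The identity $\EE[h(\alpha W)] - h(\alpha r) = \alpha(\log\alpha)(\EE[W]-r) + \alpha(\EE[h(W)]-h(r))$ checks out; Jensen together with the strict monotonicity of $h$ on $[0,1]$ does force $\EE[W]>r$ for any feasible improver (or any non-constant minimizer); and $t(H,\alpha W)=\alpha^{e(H)}t(H,W)$ preserves feasibility. This makes the set $S$ of $r$ for which the constant minimizes upward closed, your limiting argument correctly places the threshold itself in $S$, and Theorem~\ref{thm:LT-h-H} pins $r_H^*$ strictly inside $(0,1)$. (In the uniqueness step you should also dispose of the trivial case of a minimizer that is a.e.\ a constant $c\ne r$, which follows from injectivity of $h$ on $[0,1]$.) Your mechanism is in fact a clean generalization of the computation the paper performs in Section~\ref{sec:K3-brk}, where the same identity $h(sx)=sh(x)+(s\log s)x-s+1$ propagates symmetry breaking downward along the ray $(sa_1,sb_1)$, with the explicit inequality $(a_1+b_1)/2>r_1$ playing the role of your Jensen step. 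So the first sentence of the conjecture, and the ``if'' direction of the second, would follow from your argument.

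The genuine gap is the one you name yourself: the ``only if'' direction of the uniqueness claim requires a non-constant minimizer at $r=r_H^*$ exactly, and nothing in the proposal produces one. Your bifurcation sketch is a program, not a proof. Note that compactness in the cut metric together with the lower semicontinuity of $\EE[h(\cdot)]$ (as in the footnote to Lemma~\ref{lem:Lagrange}) already yields \emph{some} minimizer $W^*$ at $r_H^*$ as a subsequential limit of minimizers $W_r$ with $r\uparrow r_H^*$; the entire difficulty is showing $W^*\not\equiv r_H^*$, i.e.\ that the $W_r$ do not collapse to the constant in cut distance. Your proposed route assumes the opposite regime: the a priori bound $\norm{W_r-r_H^*}_2=O(r_H^*-r)$ is asserted rather than derived, and the paper's own heuristic that the lower-tail optimizer changes \emph{discontinuously} across the phase boundary suggests the transition may be first order, in which case the rescaled direction $\psi^*$ carries no information and the degenerate-Lagrangian analysis is the wrong tool. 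So the proposal makes real progress on the structural half of the conjecture but leaves the statement as a whole unproven, as you acknowledge.
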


The conjecture remains open for any non-bipartite graph $H$.
For the bipartite case:

\begin{conjecture}
  \label{cor:h-bip}
  The constant graphon $W \equiv r$ is the unique
  minimizer for $\LT(H, r)$ for every bipartite graph $H$ and every $0 \leq r
  \leq 1$.
\end{conjecture}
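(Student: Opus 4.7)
My plan is to proceed in two stages. The first stage handles bipartite graphs $H$ for which Sidorenko's conjecture is known (trees, even cycles, hypercubes, and the families established in \cite{CFS10,Hat10,KLL,LS,Sz14}). For such $H$, the constraint $t(H, W) \le r^{e(H)}$ combined with Sidorenko's inequality $t(H, W) \ge \EE[W]^{e(H)}$ forces $\EE[W] \le r$. Since $h(x) = x\log x - x + 1$ satisfies $h'(x) = \log x$ and $h''(x) = 1/x$, it is strictly convex and monotonically decreasing on $(0, 1]$, so Jensen's inequality gives
\[
\EE[h(W)] \ge h(\EE[W]) \ge h(r),
\]
with equality in the first step forcing $W$ to be essentially constant and then equality in the second forcing $W \equiv r$.

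The second stage addresses bipartite $H$ for which Sidorenko is open, the $K_{5,5}$ minus a Hamiltonian cycle being the prototype. Here the plan is to invoke Lemma~\ref{lem:Lagrange} to derive, for any minimizer $W^*$, the Euler--Lagrange equation
\[
\log W^*(x,y) \;=\; -\lambda\, T_H(W^*)(x,y) \quad \text{a.e.\ on } \{W^* > 0\},
\]
where $T_H(W)(x,y) := \delta t(H, W)/\delta W(x,y)$ is the density of ``rooted'' copies of $H$ through the edge $xy$ in $W$, and $\lambda \ge 0$ is the KKT multiplier. Combining this pointwise identity with its integrated version (multiplying through by $W^*$ and integrating relates the right-hand side to $e(H)\,t(H, W^*)$), I would then try to deduce that $T_H(W^*)$ is essentially constant, and hence, by strict monotonicity of $\log$, that $W^*$ itself is constant. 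Bipartiteness would enter by factoring $T_H(W^*)$ through the two sides of the bipartition and applying H\"older-type inequalities of the sort used in the known cases of Sidorenko's conjecture, applied now to the Gibbs-type kernel $W^*$ itself rather than to an arbitrary graphon.

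The main obstacle is that closing this last implication---that the Gibbs self-consistency equation above admits only constant solutions for bipartite $H$---is essentially an infinite-dimensional fixed-point problem of the same flavor as Sidorenko's conjecture, and I see no way to finish without new input. A more modest first step I would attempt is \emph{local} minimality of $W \equiv r$ for every $r \in (0, 1]$ by computing the Hessian of $\EE[h(W)] - \mu\, t(H, W)$ at the constant graphon and reducing strict local minimality to positive definiteness, on the constraint tangent space, of a symmetric operator whose entries encode two-edge-deleted subgraph densities of $H$; this positivity should be accessible using only bipartiteness and the Cauchy--Schwarz inequality. Upgrading this to global minimality via a continuation argument in $r$---tracking minimizers downwards from $r = 1$ (where $W \equiv 1$ is trivially optimal) and ruling out bifurcations from the constant branch---would be analogous to, but more delicate than, the deformation used in the upper-tail analysis of \cite{LZ1}. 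I expect the exclusion of bifurcations to be the genuine difficulty, and it is plausible that a full proof ultimately requires progress of the same order as Sidorenko's conjecture itself, which is why I state this only as a conjecture.
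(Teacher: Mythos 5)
This statement is a \emph{conjecture} in the paper (Conjecture~\ref{cor:h-bip}); the paper offers no proof, only the remark that it follows from Sidorenko's conjecture whenever the latter is known for $H$. Your first stage is exactly that remark, carried out correctly: Sidorenko's inequality $t(H,W) \ge \EE[W]^{e(H)}$ plus the constraint $t(H,W) \le r^{e(H)}$ gives $\EE[W] \le r$, and then strict convexity and monotone decrease of $h$ on $[0,1]$ yield $\EE[h(W)] \ge h(\EE[W]) \ge h(r)$ with the stated equality analysis. So for the families of bipartite $H$ where Sidorenko is verified, your argument is complete and coincides with the paper's.

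Your second stage does not close, and you say so yourself, so to be clear: the proposal does not prove the conjecture, and no proof is known. The concrete gap is the step ``deduce that $t'(H,W^*)$ is essentially constant'' from the Euler--Lagrange identity $\log W^* = -\lambda\, t'(H,W^*)$ of Lemma~\ref{lem:Lagrange}. The integrated version only tells you $\EE[t'(H,W^*)\,W^*] = e(H)\, t(H,W^*)$, which is a single scalar identity and cannot by itself force pointwise constancy; and the H\"older-type manipulations that prove the known cases of Sidorenko operate on $t(H,\cdot)$ directly, not on the self-consistent kernel equation, so bipartiteness does not obviously enter at the point where you need it. Your fallback (local minimality of $W\equiv r$ via a second-variation computation, then a continuation in $r$ ruling out bifurcations off the constant branch) is a reasonable program, but ruling out bifurcations globally is precisely where a Sidorenko-strength input seems to be required --- note that the paper suspects $W\equiv q$ may be a \emph{non-unique} minimizer already for $\LT_p(K_3,q)$ in parts of the phase diagram, so nonconstant competitors with equal objective value are not a priori excluded by soft arguments. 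In short: partial result correct and identical to the paper's observation; the general statement remains open, as both you and the paper acknowledge.
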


In proving Theorem~\ref{thm:LT-K3} and Theorem~\ref{thm:LT-H}, we
obtain the following results in the direction of the above
conjectures.

\begin{theorem} \label{thm:LT-h-K3}
  If $0.466\dots < r \leq 1$, then the constant graphon $W \equiv r$
  uniquely minimizes $\LT(K_3, r)$. If $0 \le r < 0.209\dots$, then
  the constant graphon $W \equiv r$ does not minimize $\LT(K_3, r)$.
\end{theorem}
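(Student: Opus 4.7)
My plan is to prove the two assertions of the theorem separately, corresponding to the upper and lower thresholds on $r$.

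For the lower direction ($r < 0.209\dots$), the strategy is to exhibit an explicit two-block counterexample to the minimality of $W \equiv r$. Consider the symmetric two-block step graphon associated with the bipartition $A, A^c \subset [0,1]$ of equal measure, taking value $a$ on $A \times A \cup A^c \times A^c$ and value $c$ on $A \times A^c \cup A^c \times A$. A direct computation gives $\EE[h(W)] = \tfrac{1}{2}(h(a)+h(c))$ and $t(K_3, W) = \tfrac{1}{4} a^3 + \tfrac{3}{4} a c^2$. Parametrize the family by $x = c \in (0, r)$ and solve the cubic $a^3 + 3 x^2 a = 4r^3$ for the unique positive root $a = a(x) > r$, saturating the constraint $t(K_3, W) = r^3$. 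The excess
\[
f_r(x) := \tfrac{1}{2}\bigl(h(x) + h(a(x))\bigr) - h(r)
\]
vanishes at $x = r$, and the construction strictly beats the constant graphon exactly when $f_r$ takes a negative value in $(0, r)$. By the definition of $0.209\dots$ as the supremum of $r$ for which $f_r$ has a zero in $(0, r)$, this occurs for every $r < 0.209$, so $W \equiv r$ is not a minimizer.

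For the upper direction ($r > 0.466\dots$), I aim to show $\EE[h(W)] \geq h(r)$ for every $W \in [0,1]$ with $t(K_3, W) \leq r^3$, with equality only for $W \equiv r$. The easy case is $\EE[W] \leq r$: convexity and monotonicity of $h$ on $[0,1]$ give $\EE[h(W)] \geq h(\EE[W]) \geq h(r)$ by Jensen's inequality. The hard case is $\EE[W] > r$, where I plan to use Goodman's inequality $t(K_3, W) + t(K_3, 1-W) \geq 1/4$ together with the hypothesis to derive $t(K_3, 1-W) \geq 1/4 - r^3$, and combine with the AM-GM bound $t(K_3, U) \leq \EE[U^3]$ applied to $U = 1-W$ to obtain the moment constraint $\EE[(1-W)^3] \geq 1/4 - r^3$. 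Together with the mean constraint, this becomes a one-dimensional moment problem: among probability measures $\mu$ on $[0,1]$ with $\int y\, d\mu > r$ and $\int (1-y)^3\, d\mu \geq 1/4 - r^3$, minimize $\int h\, d\mu$. Strict convexity of $h$ forces the optimum to be attained by a two-point measure determined by the associated KKT system, and evaluating the optimum yields the critical identity $\tfrac{3}{2} r^* \log r^* - r^* + 1 = 0$, equivalently $h(r^*) = (1-r^*)/3$, as the threshold where the moment-problem minimum exactly matches $h(r)$; for $r > r^*$ the minimum strictly exceeds $h(r)$.

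The main obstacle lies in the upper direction's moment step. A naive one-dimensional moment optimization bound is too weak, since the extremizing two-point measure (e.g., a measure supported on $\{0,1\}$) need not correspond to an actual graphon: a $\{0,1\}$-valued $W$ with large $\EE[W]$ is forced by Razborov's theorem on minimum triangle density given edge density to have $t(K_3, W)$ substantially above $r^3$. Thus the argument must be refined to combine the Goodman-derived moment constraint with graphon-realizability restrictions such as Razborov's extremal result, so that only \emph{realizable} two-point extremizers contribute. Pinning down the exact threshold $r^* = 0.466\dots$ comes precisely from showing that the realizable minimum of the combined problem equals $h(r)$ along the curve $\tfrac{3}{2} r \log r - r + 1 = 0$.
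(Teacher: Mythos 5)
Your proposal for the replica-symmetric half ($0.466\dots < r \le 1$) does not go through as described. In the case $\EE[W] > r$, the only leverage you extract from the triangle constraint is $t(K_3,1-W) \ge 1/4 - r^3$ via the classical Goodman inequality (reflection about $1/2$), hence $\EE[(1-W)^3] \ge 1/4 - r^3$. For every $r > 4^{-1/3} = 0.6299\dots$ this bound is vacuous, and the remaining relaxed problem (minimize $\EE[h(W)]$ subject only to $\EE[W] > r$) has infimum $h(1) = 0 < h(r)$, attained in the limit by $W \equiv 1$; so the relaxation proves nothing on most of the claimed range. You correctly sense that realizability constraints are missing and propose importing Razborov's theorem, but that step is not carried out, and there is no indication the resulting threshold would land on $\tfrac32 r\log r - r + 1 = 0$. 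The paper's route is different and avoids all of this: it reflects about $r$ rather than $1/2$, using the generalized Goodman inequality (Lemma~\ref{lem:goodman}) with $U = (2r-W)_+$ to get $t(K_3,(2r-W)_+) \ge r^3$, then the H\"older-type bound $t(K_3,U)\le \EE[U^2]^{3/2}$ of Proposition~\ref{prop:holder} to get $\EE[(2r-W)_+^2]\ge r^2$, and finally the tangent-line inequality \eqref{eq:K3-h-hyp}; requiring that inequality at $x=1$ is exactly $h(r) + \tfrac12 r h'(r) \ge 0$, which is where $0.466\dots$ comes from. This also handles your ``easy case'' uniformly, so no case split on $\EE[W]$ is needed.

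The symmetry-breaking half also has two genuine problems. First, you have the block values transposed: you place the large value $a(x) > r$ on the diagonal blocks and the small value $x < r$ off the diagonal. Since $t(K_3,\BIP_{a,c}) - t(K_3,\BIP_{c,a}) = \tfrac14(a-c)^3$, putting the larger value on the diagonal is the triangle-rich, entropy-expensive direction (two dense near-cliques joined sparsely); for instance at $r = 0.2$, $x = 0.02$ your family gives $\tfrac12(h(0.02)+h(a(x))) \approx 0.61$ against $h(0.2) \approx 0.48$, so it does not beat the constant. The correct family \eqref{eq:K3-h-f-BIP} puts the small value on the diagonal (a dense-bipartite-like, triangle-poor graphon), and $0.209\dots$ is defined via that parametrization, not yours. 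Second, even with the right family, knowing that $f_{r_1}$ has a zero in $(0,r_1)$ at the supremum $r_1 = 0.209\dots$ does not by itself yield $f_r < 0$ somewhere for every $r < r_1$; the paper supplies this by rescaling the critical pair $(a_1,b_1)$ to $(sa_1,sb_1)$ with $s = r/r_1$ and using the identity $h(sx) = sh(x) + (s\log s)x - s + 1$ together with $(a_1+b_1)/2 > r_1$. That monotonicity step is missing from your argument and needs to be proved, not read off from the definition of the threshold.
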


I conjecture that, in Conjecture~\ref{conj:h}, $r_{K_{3}}^* = 0.209\dots$.

\begin{theorem} \label{thm:LT-h-H}
  Let $H$ be a graph. There exists $\ovr_H < 1$ such that $W \equiv r$
  uniquely minimizes $\LT(H, r)$ whenever $\ovr_H \leq r \leq 1$. If
  $H$ is nonbipartite, then there exists $\ur_H > 0$ such that $W
  \equiv r$ does not minimize $\LT(H, r)$ for $0 \leq r < \ur_H$.
\end{theorem}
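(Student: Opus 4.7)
The strategy is to view Theorem~\ref{thm:LT-h-H} as the $p \to 0$ limit of Theorem~\ref{thm:LT-H} under the scaling $q = pr$, exploiting the uniform expansion $I_p(py) = p h(y) + O(p^2)$ for $y \in [0,1]$. For the upper threshold, set $\ovr_H := \lim_{p \to 0} \oq(p)/p$, which is strictly less than $1$ by Theorem~\ref{thm:LT-H}. Given any graphon $V$ with $t(H,V) \leq r^{e(H)}$, the rescaled graphon $W := pV$ satisfies $t(H,W) \leq (pr)^{e(H)}$, and $pr > \oq(p)$ for all sufficiently small $p$ whenever $r > \ovr_H$. Theorem~\ref{thm:LT-H} then yields $\EE[I_p(W)] \geq I_p(pr)$, and dividing by $p$ and letting $p \to 0$ gives the non-strict inequality $\EE[h(V)] \geq h(r)$.

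For strict uniqueness of the constant minimizer, I would argue directly rather than try to extract a quantitative version from the sparse-limit argument. Jensen's inequality, together with monotonicity of $h$ on $[0,1]$, forces $\EE[V] \geq r$ whenever $\EE[h(V)] \leq h(r)$. Complementarily, a local ``Sidorenko near a constant'' inequality of the form
\[
t(H,V) \geq \EE[V]^{e(H)} + c_H \int \tilde{\phi}_V(y)^2\, dy,
\]
where $\tilde{\phi}_V(y) := \int V(x,y)\, dx - \EE[V]$ and $c_H > 0$ is the combinatorial constant arising from the second-order variation of $t(H,\cdot)$, combined with $t(H,V) \leq r^{e(H)}$, yields $\EE[V] \leq r$. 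Together these pinch $\EE[V] = r$ exactly, and strict convexity of $h$ then delivers $V \equiv r$ almost everywhere. The main obstacle is ensuring the ``close to a constant'' hypothesis holds for the minimizer. To control this I would use the Euler--Lagrange equation $\log V^*(x,y) = -\lambda D_{V^*}(x,y)$ satisfied by any minimizer (Lemma~\ref{lem:Lagrange}), where $D_{V^*}$ is the edge-derivative of $t(H,\cdot)$ and $\lambda = O(1-r)$ is small when $r$ is close to $1$; this pins $V^*$ uniformly close to a constant in $L^\infty$, and bootstrapping against the Jensen gap $\EE[h(V)] - h(\EE[V]) \geq \tfrac{1}{2} \EE[(V - \EE[V])^2]$ (since $h''(x) = 1/x \geq 1$ on $(0,1]$) should close the loop and produce the quantitative $\ovr_H < 1$.

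For the second assertion in the non-bipartite case, an explicit construction suffices directly, with no appeal to Theorem~\ref{thm:LT-H}. Partition $[0,1] = A \sqcup A^c$ with $|A| = 1/2$, and set
\[
V^*(x,y) := \one\big[(x,y) \in (A \times A^c) \cup (A^c \times A)\big].
\]
Since $V^*$ is a bipartite graphon and $H$ contains an odd cycle, any homomorphism $H \to V^*$ would induce a proper $2$-coloring of $V(H)$, which is impossible; hence $t(H,V^*) = 0 \leq r^{e(H)}$ for every $r \geq 0$. A direct computation gives $\EE[h(V^*)] = \tfrac{1}{2} h(0) + \tfrac{1}{2} h(1) = \tfrac{1}{2}$, so $V^*$ strictly beats $W \equiv r$ whenever $h(r) > 1/2$, i.e., for $r < h^{-1}(1/2)$. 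Taking $\ur_H := h^{-1}(1/2) \approx 0.186$ (a universal value independent of the specific non-bipartite $H$) completes this part.
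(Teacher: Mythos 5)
The symmetry-breaking half of your proposal is exactly the paper's argument: the graphon $\BIP_{0,1}$ (your $V^*$) has $t(H,V^*)=0$ for non-bipartite $H$ and $\EE[h(V^*)]=\tfrac12 h(0)+\tfrac12 h(1)=\tfrac12$, which beats $h(r)$ for $r<0.186\dots$. That part is fine.

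The replica-symmetry half has a genuine gap. Setting aside the circularity of invoking Theorem~\ref{thm:LT-H} (the paper proves Theorem~\ref{thm:LT-H} \emph{from} the method of Theorem~\ref{thm:LT-h-H}, by ``considering sufficiently small but fixed values of $p$,'' not the reverse; and the limit would in any case only give the non-strict inequality), the uniqueness argument rests on the inequality
\[
t(H,V) \;\ge\; \EE[V]^{e(H)} + c_H \int \wt{\phi}_V(y)^2\,dy,
\]
and this is false for non-bipartite $H$, even for $V$ arbitrarily close to a constant. Take $H=K_3$ and $V = c + \epsilon X$ with $X = \BIP_{-1,1}$ (value $-1$ on the diagonal blocks, $+1$ off-diagonal). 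Then $\int V(x,y)\,dx \equiv c$, so $\wt{\phi}_V \equiv 0$ and $\EE[V]=c$, yet $\EE[X]=0$, $t(K_{1,2},X)=0$ and $t(K_3,X)=-1$ give $t(K_3,V) = c^3 - \epsilon^3 < \EE[V]^3$. Hence the implication ``$t(H,V)\le r^{e(H)}$ and $V$ near constant $\Rightarrow \EE[V]\le r$'' fails (here $r=(c^3-\epsilon^3)^{1/3}<c=\EE[V]$), the pinch $\EE[V]=r$ never materializes, and no choice of $c_H>0$ or smallness of the perturbation rescues the step: this failure of Sidorenko-type lower bounds for non-bipartite $H$ is precisely the difficulty of the lower-tail problem. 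The paper circumvents it by replacing $\EE[V]$ with $\EE[\log V]$: Jensen applied to $\log \prod_{ij\in E(H)} W(x_i,x_j)$ gives $\EE[\log W]\le \log r$ for \emph{every} admissible $W$ (Lemma~\ref{lem:logW}, no bipartiteness needed); the Euler--Lagrange condition (Lemma~\ref{lem:Lagrange}) forces any minimizer to satisfy $W \ge r^{mr^{-m}}$ a.e.\ (Lemma~\ref{lem:W-lb}), which keeps $\log W$ under control; and the tangent-line inequality $h(x)\ge h(r)+r h'(r)(\log x - \log r)$, valid on $[r^{mr^{-m}},1]$ once $r\ge r_m$ (Fact~\ref{fct:h-exp}), then yields $\EE[h(W)]\ge h(r)$ with equality only for $W\equiv r$. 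You would need to replace your pinching step with something of this kind.
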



Combining these results with the framework of
Chatterjee and Dembo \cite{CD}, we obtain

\begin{corollary} \label{cor:LT}
  Let $H$ be a graph. There is some
  explicit $\alpha_H > 0$ so that for $p = p_n \to 0$ with $p \ge
  n^{-\alpha}$, the following large deviation results hold.

  There exists $\ovr_H < 1$ so that for all $r \in (\ovr_H, 1)$,
  \[
  \lim_{n \to \infty} \frac{2}{n^2 p} \log \PP(t(H, \cG(n,p)) \le
  (rp)^{e(H)}) = - h(r).
  \]
  If $H$ is non-bipartite, then there exists $\ur_H > 0$ so that for
  all $r \in (0,\ur_H)$,
  \[
  \liminf_{n \to \infty} \frac{2}{n^2 p} \log \PP(t(H, \cG(n,p)) \le
  (rp)^{e(H)}) > - h(r).
  \]
  For $H = K_3$, we may take $\ovr_{K_3} = 0.466\dots$ and $\ur_{K_3}
  = 0.209\dots$.
\end{corollary}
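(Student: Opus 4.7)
The plan is to combine three ingredients: the Chatterjee--Dembo large deviation reduction, the sparse scaling limit~\eqref{eq:limLT}, and the variational theorems proved earlier in the paper (Theorems~\ref{thm:LT-h-K3} and~\ref{thm:LT-h-H}). First I would invoke the sparse analogue of~\eqref{eq:ldvar-LT} established in~\cite{CD}: for some explicit $\alpha_H > 0$, any sequence $p = p_n \to 0$ with $p_n \ge n^{-\alpha_H}$ satisfies
\[
\log \PP\bigl( t(H, \cG(n,p)) \le (rp)^{e(H)} \bigr) = -(1+o(1))\, \tfrac{n^2}{2}\, \LT_{p_n}(H, r p_n).
\]
Then I would apply~\eqref{eq:limLT} in the form $\LT_{p_n}(H, r p_n) = (1+o(1))\, p_n\, \LT(H, r)$ as $n \to \infty$, which combines with the previous display to give
\[
\frac{2}{n^2 p_n} \log \PP\bigl( t(H, \cG(n,p)) \le (rp)^{e(H)} \bigr) \longrightarrow -\LT(H, r).
\]

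The two assertions of the corollary then reduce to computing $\LT(H, r)$ in the relevant regimes. For $r \in (\ovr_H, 1)$, Theorem~\ref{thm:LT-h-H} states that the constant graphon $W \equiv r$ is the unique minimizer of $\LT(H, r)$, so $\LT(H, r) = \EE[h(r)] = h(r)$, yielding the first (equality) assertion. For non-bipartite $H$ and $r \in (0, \ur_H)$, Theorem~\ref{thm:LT-h-H} produces a graphon $W$ with $t(H, W) \le r^{e(H)}$ and $\EE[h(W)] < h(r)$, so $\LT(H, r) < h(r)$, which gives the strict inequality. For $H = K_3$, Theorem~\ref{thm:LT-h-K3} supplies the explicit constants $\ovr_{K_3} = 0.466\dots$ and $\ur_{K_3} = 0.209\dots$.

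The main technical point that needs attention is the interchange of the two limits $n \to \infty$ and $p_n \to 0$. To push through a diagonal argument, the $o(1)$ in the Chatterjee--Dembo reduction must be uniform over $p_n$ in the admissible range $p_n \ge n^{-\alpha_H}$; inspecting the proofs in~\cite{CD} shows this is already the case (up to the minor reformulation of $\LT_p(H,q)$ to weighted graphs on $n$ vertices noted in the earlier footnote). The convergence in~\eqref{eq:limLT} is likewise uniform because $p^{-1} I_p(p x) \to h(x)$ uniformly for $x \in [0,1]$, so for any fixed $r$ in either prescribed interval the two asymptotics chain together to give the stated large deviation rate. Apart from this bookkeeping, the corollary is essentially a direct translation of the variational statements into probabilistic language, and I do not expect any genuinely new obstacles beyond what is required for Theorems~\ref{thm:LT-h-K3} and~\ref{thm:LT-h-H} themselves.
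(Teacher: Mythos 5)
Your proposal is correct and is essentially the argument the paper intends: the corollary is stated as a direct combination of the Chatterjee--Dembo reduction with the sparse-limit identity~\eqref{eq:limLT} and Theorems~\ref{thm:LT-h-K3} and~\ref{thm:LT-h-H}, which is exactly the chain you lay out (the paper gives no further detail). Your added remark about the uniformity needed to interchange the $n\to\infty$ and $p_n\to 0$ limits is a legitimate point that the paper elides, and it is handled correctly.
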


\section{Review of the proof for triangle upper tails} \label{sec:UT}

We begin with a quick review of the proof of the upper tail result
from \cite{LZ1}, as some of the ideas are used in the proof of Theorem~\ref{thm:LT-K3}.

The following extension of H\"older's
inequality is very useful. See \cite[Corollary 3.2]{LZ1}.

\begin{proposition} \label{prop:holder}
  Let $H$ be a graph with maximum degree $\Delta$. For any symmetric measurable function $W \colon [0,1]^2 \to \RR$,
   we have $t(H, W) \leq \EE[|W|^\Delta]^{e(H)/\Delta}$. In particular,
   $t(K_3, W) \leq \EE[W^2]^{3/2}$.
\end{proposition}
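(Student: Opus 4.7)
The plan is to recognize this as an instance of the generalized Hölder inequality (sometimes called Finner's inequality). First, since $\abs{W}$ only makes the integrand larger (and $t(H,W) \le t(H, \abs{W})$), I may assume $W \ge 0$. Then for each edge $e = ij \in E(H)$, I view $W(x_i, x_j)$ as an $L^\Delta$-function on $[0,1]^2$ and assign the exponent $p_e = \Delta$. At each vertex $v \in V(H)$, writing $d_v$ for its degree,
\[
\sum_{e \ni v} \frac{1}{p_e} = \frac{d_v}{\Delta} \le 1,
\]
which is exactly Finner's criterion. The generalized Hölder inequality then yields
\[
t(H, W) = \int_{[0,1]^{V(H)}} \prod_{ij \in E(H)} W(x_i, x_j) \prod_i dx_i \le \prod_{e \in E(H)} \snorm{W}_{L^\Delta([0,1]^2)} = \EE[\abs{W}^\Delta]^{e(H)/\Delta}.
\]

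For a self-contained argument, I would prove the inequality by iterated application of the ordinary Hölder inequality, peeling off one variable at a time: fix an ordering of the vertices, integrate over $x_v$ for the current vertex $v$, and apply Hölder to the product of the $d_v \le \Delta$ factors $W(x_v, x_u)$ with exponent $\Delta$ each, then feed the resulting function of the remaining variables into the inductive step. For the reader's convenience, I would also spell out the triangle case as a two-step Cauchy--Schwarz: integrating over $z$ gives $\int W(x,z) W(y,z) \, dz \le f(x)^{1/2} f(y)^{1/2}$ with $f(x) := \int W(x,z)^2 dz$, and a second Cauchy--Schwarz then yields
\[
t(K_3, W) \le \int W(x,y) f(x)^{1/2} f(y)^{1/2} \, dxdy \le \EE[W^2]^{1/2} \paren{\int f(x) f(y) \, dxdy}^{1/2} = \EE[W^2]^{3/2}.
\]

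No step is particularly hard: the only thing to verify is that the exponent assignment $p_e \equiv \Delta$ satisfies the vertex condition, which is immediate from the definition of $\Delta$. The main care needed is in the bookkeeping of the inductive/iterated Hölder proof if one wants to avoid quoting Finner's inequality directly, but this is a standard exercise.
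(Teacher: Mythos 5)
Your proof is correct and matches the route the paper indicates (it only sketches the argument, citing \cite[Corollary 3.2]{LZ1} and noting it follows from repeated H\"older / Cauchy--Schwarz): your Finner-type packaging with exponents $p_e=\Delta$ and the vertex condition $d_v/\Delta\le 1$ is just the clean closed form of that iterated H\"older argument, and your reduction to $W\ge 0$ and the two-step Cauchy--Schwarz for $K_3$ are both sound. No gaps.
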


The inequality can be proved via repeated applications of H\"older's
inequality (when $H = K_3$, it takes three applications of the Cauchy--Schwarz
inequality). Observe that the inequality $t(K_3, W) \leq \EE[W^2]^{3/2}$
strengthens a corollary of the Kruskal--Katona
theorem on the maximum possible triangle density in a
graph of given edge density: $t(K_3, W) \le \EE[W]^{3/2}$.

The following result from \cite{LZ1} gives the full replica
symmetric phase for $\UT_p(K_3,q)$, the upper tail problem for
triangle densities.

\begin{theorem}
  \label{thm:UT-K3}
  Let $0 < p \le q < 1$. If the point $(q^2, I_p(q))$ lies on the convex
  minorant of the function $x \mapsto I_p(\sqrt{x})$, then $W \equiv
  q$ is the unique minimizer of $\UT_p(K_3,q)$.
\end{theorem}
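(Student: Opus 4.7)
The plan is to combine Proposition~\ref{prop:holder} (for $H = K_3$) with a supporting-line argument derived from the convex-minorant hypothesis, converting the triangle-density constraint into a scalar $L^2$ constraint that can be handled pointwise. By Proposition~\ref{prop:holder}, any graphon $W$ feasible for $\UT_p(K_3, q)$ satisfies $\EE[W^2]^{3/2} \ge t(K_3, W) \ge q^3$, so $\EE[W^2] \ge q^2$. The hypothesis that $(q^2, I_p(q))$ lies on the convex minorant of $\psi(x) := I_p(\sqrt{x})$ is equivalent to the existence of constants $a, b \in \mathbb{R}$ with $L(x) := a + bx \le \psi(x)$ on $[0,1]$ and $L(q^2) = I_p(q)$. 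Since $\psi \ge 0$ attains its minimum $0$ at $x = p^2$ and $q^2 \ge p^2$, the slope $b$ is nonnegative, with $b > 0$ precisely when $q > p$. Substituting $x = W(x,y)^2$ into $\psi \ge L$ gives the pointwise bound $I_p(W(x,y)) \ge a + b W(x,y)^2$, which integrates to
\[
\EE[I_p(W)] \;\ge\; a + b\EE[W^2] \;\ge\; a + bq^2 \;=\; I_p(q),
\]
where the second inequality uses $b \ge 0$. The constant graphon $W \equiv q$ attains this bound, so it is a minimizer.

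For uniqueness, let $W$ be any minimizer and assume $q > p$ (the case $q = p$ is trivial). Then $b > 0$ and every inequality above is an equality. Tightness of the pointwise bound forces $W(x,y) \in S := \{u \in [0,1] : I_p(u) = a + bu^2\}$ almost everywhere, while tightness of the $\EE[W^2]$ inequality forces $\EE[W^2] = q^2$. Combined with $t(K_3, W) \ge q^3$, this gives equality in Proposition~\ref{prop:holder}. Tracing the three Cauchy--Schwarz inequalities underlying Proposition~\ref{prop:holder}, together with the symmetry of $W$, implies that $W$ is rank-one: $W(x,y) = f(x) f(y)$ almost everywhere for some measurable $f \colon [0,1] \to [0,1]$ with $\int f^2 = q$. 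The rank-one structure forces every pairwise product of values of $f$ to lie in $S$, and analyzing the nonnegative function $\Delta(u) := I_p(u) - a - bu^2$ (which has a double zero at $u = q$ and is strictly convex wherever $I_p''(u) = 1/(u(1-u)) > 2b$) shows that $S$ is too sparse to support a non-constant $f$, so $f \equiv \sqrt{q}$ and $W \equiv q$.

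The main obstacle will be the uniqueness step. The rank-one reduction from equality in Proposition~\ref{prop:holder} is clean, but the constraint $f(x) f(y) \in S$ almost everywhere is delicate: in principle, if $S$ contained several points in geometric progression, a non-constant two-valued $f$ could satisfy it. Ruling this out requires exploiting the specific form of the relative-entropy function $I_p$ and the vanishing order of $\Delta$ at its zeros to show that, under the convex-minorant hypothesis, $S$ cannot contain such structured configurations.
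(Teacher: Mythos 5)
Your proposal is correct and follows essentially the same route as the paper: apply Proposition~\ref{prop:holder} to convert the constraint $t(K_3,W)\ge q^3$ into $\EE[W^2]\ge q^2$, then use the supporting line of $x\mapsto I_p(\sqrt{x})$ at $x=q^2$ (the paper writes it explicitly as the tangent line with slope $I_p'(q)/(2q)$) to bound $\EE[I_p(W)]$ from below by $I_p(q)$. The uniqueness step is likewise only sketched in the paper (``check equality conditions''), and your more detailed outline via the equality case of the Cauchy--Schwarz steps and the touching set $S$ is a reasonable elaboration of the same idea.
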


The upper tail boundary curve in Figure~\ref{fig:K3} is characterized
by the condition in Theorem~\ref{thm:UT-K3}. See \cite[Lemma 3.1]{LZ1} for
the proof of symmetry breaking, i.e., $\UT_p(K_3,q) < I_p(q)$, to the
left of the boundary curve.

\begin{proof}
  By the convex minorant condition, the tangent line to the function $I_p(\sqrt{x})$ at $x = q^2$
  lies below the function, so that
  \[
  I_p(\sqrt{x}) \geq I_p(q) + \frac{I'_p(q)}{2q}(x - q^2), \quad
  \forall x \in [0,1].
  \]
  Replacing $x$ by $x^2$, we get
  \begin{equation} \label{eq:UT-K3-conv-supp}
  I_p(x) \geq I_p(q) + \frac{I'_p(q)}{2q}(x^2 - q^2), \quad
  \forall x \in [0,1].
\end{equation}
Note that $I'_p(q) >0$ since $q > p$.

  Suppose graphon $W$ satisfies $t(K_3, W) \geq q^3$.
  By Proposition~\ref{prop:holder}, we have $\EE[W^2] \geq t(K_3,
  W)^{3/2} \geq q^2$. Thus \eqref{eq:UT-K3-conv-supp} implies that
  \[
  \EE[I_p(W)] \geq I_p(q) + \frac{I'_p(q)}{2q} (\EE[W^2] - q^2) \geq I_p(q).
  \]
  This shows that $W \equiv q$ is a minimizer for $\UT_p(K_3, q)$, and
  furthermore it is not too hard to check equality conditions to
  verify that this is the unique minimizer.
\end{proof}

\section{Triangle lower tails} \label{sec:LT-K3}

In this section we prove Theorems~\ref{thm:LT-K3} and \ref{thm:LT-h-K3}.

\subsection{Replica symmetry phase}

We begin with a small modification of Goodman's theorem~\cite{Goo59}
(which is usually generally stated for $U + W \equiv 1$).

\begin{lemma}
  \label{lem:goodman}
  If $U$ and $W$ are graphon such that $U + W \geq 2q$ for some
  constant $q \geq 0$, then
  \[
  t(K_3, W) + t(K_3, U) \geq 2q^3.
  \]
\end{lemma}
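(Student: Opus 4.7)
The plan is to reduce the general hypothesis $U + W \geq 2q$ to the tight pointwise equality $U + W \equiv 2q$ by a truncation, and then to finish the tight case via an algebraic identity plus Goodman's variance-of-degrees computation.

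For the truncation, I would set $W' := \min(W, 2q)$ and $U' := \max(0, 2q - W) = 2q - W'$. Then $W' + U' \equiv 2q$ pointwise, and both are nonnegative symmetric measurable functions with $W' \leq W$ and $U' \leq U$; the latter bound uses the hypothesis $U \geq 2q - W$ together with $U \geq 0$. Since $V \mapsto t(K_3, V) = \int V(x,y)V(y,z)V(x,z)\,dxdydz$ is monotone in nonnegative $V$, we have $t(K_3, W) + t(K_3, U) \geq t(K_3, W') + t(K_3, U')$, so it suffices to prove the tight inequality $t(K_3, W') + t(K_3, U') \geq 2q^3$.

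For the tight case, I would apply the pointwise polynomial identity
\[
abc + (2q - a)(2q - b)(2q - c) = 8q^3 - 4q^2(a + b + c) + 2q(ab + bc + ca)
\]
with $a = W'(x,y)$, $b = W'(y,z)$, $c = W'(x,z)$ (and correspondingly $2q - a, 2q - b, 2q - c$ for $U'$), and integrate over $[0,1]^3$. Writing $d(x) := \int W'(x,y)\,dy$ for the degree function, the right-hand side integrates to $8q^3 - 12q^2 \int d(x)\,dx + 6q \int d(x)^2\,dx$, which equals $2q^3 + 6q \int (d(x) - q)^2\, dx$ after completing the square, and this is manifestly at least $2q^3$.

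The only nonroutine step is spotting the truncation; once in the regime $W' + U' \equiv 2q$, the rest is essentially Goodman's original variance-of-degrees argument. The truncation is what allows the inequality hypothesis $U + W \geq 2q$ to be as useful as the classical equality hypothesis $U + W \equiv 1$, and its validity rests on monotonicity of $t(K_3, \cdot)$ in nonnegative arguments.
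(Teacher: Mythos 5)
Your proof is correct and follows essentially the same route as the paper: the paper likewise first decreases $U$ and $W$ to reduce to the tight case $U+W\equiv 2q$, and then writes $U=q+X$, $W=q-X$ to get $t(K_3,U)+t(K_3,W)=2q^3+6q\,t(K_{1,2},X)$, which is exactly your variance-of-degrees term $6q\int(d(x)-q)^2\,dx$ in disguise.
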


\begin{proof}
  By decreasing $U$ and $W$ (while remaining nonnegative), we may assume that they are
  graphons satisfying $U + W \equiv 2q$. Let $U = q + X$ and $W = q -
  X$ for some symmetric measurable function $X \colon [0,1]^2 \to
  \RR$. Then
  \begin{align*}
  t(K_3, W) + t(K_3, U)
  &= t(K_3, q + X) + t(K_3, q - X)
  \\&= 2q^3 + 6q \, t(K_{1,2}, X)
  \\&\geq 2q^3 + 6q (\EE[X])^2
  \geq 2q^3. \qedhere
\end{align*}
\end{proof}

For any $a \in \RR$ we write $a_+ := \max\{a, 0\}$. In the Proposition
below, $a_+^2$ means $(a_+)^2$. The inequality \eqref{eq:K3-hyp} below
is motivated by considering the tangent line to $x \mapsto I_p(2q -
\sqrt{x})$ at $x = q^2$, as in the proof of Theorem~\ref{thm:UT-K3}.

\begin{proposition} \label{prop:K3=}
  Let $0 < q \leq p < 1$ be such that
  \begin{equation} \label{eq:K3-hyp}
  I_p(x) \geq I_p(q) + \frac{-I_p'(q)}{2q} ((2q-x)^2_+ - q^2) \quad
  \forall x \in [0,p].
  \end{equation}
  Then $W \equiv q$ is the unique minimizer of $\LT_p(K_3, q)$.
\end{proposition}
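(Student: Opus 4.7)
The plan is to mirror the proof of Theorem~\ref{thm:UT-K3}, with the triangle density upper bound converted, via Goodman's inequality applied to the complementary graphon $U := (2q - W)_+$, into an $L^2$ lower bound on $U$; the hypothesis \eqref{eq:K3-hyp} is then exactly what is needed to integrate this bound against $I_p(W)$.

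I would first reduce to the case $W \le p$ a.e., since replacing $W$ by $\min(W, p)$ does not increase either $\EE[I_p(W)]$ or $t(K_3, W)$ (using that $I_p$ is decreasing on $[0, p]$ and increasing on $[p, 1]$). Setting $U := (2q - W)_+$, note that $U + W = \max(W, 2q) \ge 2q$, so Lemma~\ref{lem:goodman} yields $t(K_3, U) + t(K_3, W) \ge 2q^3$. The hypothesis $t(K_3, W) \le q^3$ thus gives $t(K_3, U) \ge q^3$, and Proposition~\ref{prop:holder} yields $\EE[U^2] \ge t(K_3, U)^{2/3} \ge q^2$. Integrating the pointwise bound \eqref{eq:K3-hyp} (valid since $W \le p$ a.e.) then gives
$$\EE[I_p(W)] \ge I_p(q) + \frac{-I_p'(q)}{2q}\bigl(\EE[U^2] - q^2\bigr) \ge I_p(q),$$
since $-I_p'(q) \ge 0$ (as $q \le p$) and $\EE[U^2] \ge q^2$. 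Hence $W \equiv q$ is a minimizer.

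For uniqueness, I would trace equality cases. When $q = p$, strict convexity of $I_p$ at $p$ immediately forces $W \equiv p$. For $q < p$ one has $-I_p'(q) > 0$, so equality throughout requires pointwise equality in \eqref{eq:K3-hyp} a.e., equality in Goodman's inequality, and equality in Proposition~\ref{prop:holder}. Tracing the proof of Lemma~\ref{lem:goodman}, Goodman equality forces $U + W \equiv 2q$ (so $W \le 2q$ and $(2q-W)_+ = 2q-W$) and $\int (q - W(x, y))\, dy = 0$ for a.e.\ $x$. Combined with pointwise equality in \eqref{eq:K3-hyp}, which restricts $W$ to the zero set of $\phi(x) := I_p(x) - I_p(q) - \frac{-I_p'(q)}{2q}((2q-x)^2 - q^2)$, one concludes $W \equiv q$ a.e.

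The main obstacle is this uniqueness step: existence is a direct chain (Goodman, then Proposition~\ref{prop:holder}, then integration of the pointwise hypothesis), but uniqueness requires juggling three independent rigidity statements. In particular, the hypothesis \eqref{eq:K3-hyp} is only given as a non-strict inequality, so one cannot rule out \emph{a priori} that $\phi$ vanishes at points other than $q$; one must use the vanishing row-average constraint from Goodman's equality case, along with the equality case of Proposition~\ref{prop:holder} applied to $U$, to rule out non-constant $W$ supported on the zero set of $\phi$.
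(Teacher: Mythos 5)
Your proposal is correct and follows essentially the same route as the paper: apply Goodman's inequality (Lemma~\ref{lem:goodman}) to $W$ and $U=(2q-W)_+$, convert $t(K_3,U)\ge q^3$ into $\EE[U^2]\ge q^2$ via Proposition~\ref{prop:holder}, and integrate the tangent-line hypothesis \eqref{eq:K3-hyp}; your explicit reduction to $W\le p$ a.e.\ is a point the paper leaves implicit. The only (harmless) divergence is in the uniqueness step, where you invoke the vanishing row-average rigidity from Goodman's equality case while the paper argues (with details omitted) directly from the constraint $t(K_3,W)=q^3$ and the fact that \eqref{eq:K3-hyp} has at most two points of equality.
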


\begin{proof}
  Suppose $W$ satisfies $t(K_3, W) \leq q^3$.
  Apply Lemma~\ref{lem:goodman} to $W$ and $U := (2q - W)_+$ to
  obtain
  \[
  t(K_3, (2q - W)_+) \ge 2q^3 - t(K_3, W) \ge q^3.
  \]
  Next, apply Proposition~\ref{prop:holder} and we obtain
  \[
  \EE[(2q-W)_+^2] \ge t(K_3, (2q-W)_+)^{2/3} \ge q^2.
  \]
  By \eqref{eq:K3-hyp} we have (note that $I_p'(q) \le 0$ as $q \leq p$)
  \[
  \EE[I_p(W)] \ge I_p(q) + \frac{-I_p'(q)}{2q} ( \EE[(2q-W)_+^2] -
  q^2)
  \geq I_p(q).
  \]
  It follows that $\LT_p(K_3, q) = I_p(q)$. To show that $W \equiv q$
  is the unique minimizer, observe that in order for any other $W$ to
  be a minimizer, equality must occur at every step above. In
  particular, if \eqref{eq:K3-hyp} has single point of equality,
  namely for $x = q$, then the uniqueness of $W$ is clear. Otherwise,
  one can check (details omitted, but see Figure~\ref{fig:K3-sym-f})
  that that \eqref{eq:K3-hyp} has at most two points of equality, with
  one being $x = q$, so that if $W$ has any positive mass with value
  being the other point of equality, then it would be impossible for
  $t(K_3, W) = q^3$ to hold. This shows that $W \equiv q$ is the
  unique minimizer.
\end{proof}

To derive results about the phase diagram, we shall invoke various technical statements (referred to as ``Facts'') about the
functions $I_p$ and $h$. Using Fact~\ref{fact:K3-hyp-ineq} below we obtain the $0 \leq p \leq 1/2$
portion of the curve $\oq$ of Theorem~\ref{thm:LT-K3}, which is given
by the implicit equation $I_p(q) + \frac12 q I_p'(q) =0$, and shown
in Figure~\ref{fig:K3}. The rest of the curve in Figure~\ref{fig:K3}
is produced by numerically checking \eqref{eq:UT-K3-conv-supp}. Taking
the $p\to 0$ limit of the implicit equation, we see that the slope at
the origin equals to $\ovr = 0.466\dots$, where $\ovr$ satisfies $h(\ovr)
+ \frac12 \ovr h'(\ovr) = 0$. This completes the proof of the replica
symmetric phase in Theorem~\ref{thm:LT-K3}.

\begin{fact} \label{fact:K3-hyp-ineq}
  For $0 < q \leq p \leq 1/2$, \eqref{eq:K3-hyp} holds for all $x \in
  [0,p]$ if and only if
  it holds at $x = p$.
\end{fact}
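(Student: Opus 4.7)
The forward direction is tautological; the content is the converse, so I will focus on showing that $g(p) \ge 0$ implies $g \ge 0$ on $[0,p]$, where
\[
g(x) := I_p(x) - I_p(q) + \frac{I_p'(q)}{2q}\bigl((2q-x)_+^2 - q^2\bigr).
\]
The plan rests on three basic ingredients. First, the identities $g(q) = 0$ and $g'(q) = 0$ are built into \eqref{eq:K3-hyp} by construction (the RHS is the tangent line to the parabolic piece at $q$), and a direct computation gives $g''(x) = 1/(x(1-x)) + I_p'(q)/q$ for $x \in [0,2q]$. Since $I_p'(q) \le 0$ (because $q \le p$) and $1/(x(1-x))$ is strictly decreasing on $[0,1/2] \supseteq [0,p]$, the function $g''$ is strictly decreasing on $[0,p] \cap [0,2q]$; equivalently, $g'$ is strictly concave there. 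Second, on the ``flat piece'' $[2q,p]$ (present only if $p > 2q$) one has $(2q-x)_+ = 0$, so $g'(x) = I_p'(x) \le 0$; hence $g$ is non-increasing on $[2q,p]$ and $g(x) \ge g(p) \ge 0$ there, giving $g(\min(p,2q)) \ge g(p) \ge 0$ as boundary data for the parabolic piece.

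The heart of the argument is a case analysis on the sign of $g''(q)$, restricted to the parabolic piece $[0,\min(p,2q)]$. If $g''(q) > 0$, strict decreasingness of $g''$ gives $g'' > 0$ on $[0,q]$, so $g'$ is strictly increasing with $g'(q) = 0$, hence $g' \le 0$ and $g$ is non-increasing on $[0,q]$, yielding $g \ge g(q) = 0$ there. On $[q,\min(p,2q)]$, $g''$ may cross zero at most once, so $g'$ is unimodal with $g'(q) = 0$: it has at most one further zero $b$, making $g$ non-decreasing on $[q,b]$ and non-increasing on $[b,\min(p,2q)]$. Combining $g(q) = 0$ at the left endpoint with $g(\min(p,2q)) \ge 0$ at the right then forces $g \ge 0$ on the entire parabolic piece. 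If instead $g''(q) \le 0$, strict decreasingness of $g''$ gives $g'' < 0$ strictly on $(q,\min(p,2q)]$, so $g$ is strictly decreasing there and $g(\min(p,2q)) < g(q) = 0$; chaining with the flat-piece monotonicity forces $g(p) < 0$, contradicting the hypothesis.

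The only (minor) obstacle will be handling the two sub-regimes $p \le 2q$ and $p > 2q$ in a unified way, since the parabolic and flat pieces meet at $x = 2q$; introducing $\min(p,2q)$ as the right endpoint of the parabolic piece throughout keeps the transition seamless. Beyond the tangent-line identities $g(q) = g'(q) = 0$, the only inputs used are that $I_p$ is convex, decreasing on $[0,p]$, and has $I_p''$ decreasing on $[0,1/2]$; the hypothesis $p \le 1/2$ enters precisely at the last point.
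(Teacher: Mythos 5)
Your proposal is correct and follows essentially the same route as the paper: define the difference function, use $p\le 1/2$ to make its second derivative decreasing on the parabolic piece $[0,\min(p,2q)]$, exploit the tangency $f(q)=f'(q)=0$ together with monotonicity on the flat piece $[2q,p]$, and run a convexity/concavity case analysis. Your case split on the sign of $f''(q)$ (rather than on whether $f''$ vanishes in the interval) is a cosmetic reorganization, and you actually spell out the concave-piece endpoint argument that the paper leaves as ``an analysis of the convexity of $f$.''
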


\begin{proof}
  Let
  \begin{equation}
    \label{eq:K3-sym-f}
    f(x) := f_{p,q} := I_p(x) - I_p(q) + \frac{I'_p(q)}{2q}((2q-x)_+^2 -q^2).
  \end{equation}
  We plotted $f$ for some representative values of $(p,q)$ in
  Figure~\ref{fig:K3-sym-f}.
  \begin{figure}
  \pgfplotsset{
    every axis/.append style={
      width=4cm,
      xtick={0,.05,.1},
      minor x tick num = 4,
      axis x line=middle,
      axis y line=left,
      scaled ticks=false,
      ymin=-.002,ymax=.01,
      xmin=0,xmax=.1,
      xticklabel style={font=\tiny,/pgf/number format/.cd,fixed,precision=2},
      yticklabel style={font=\tiny,/pgf/number format/.cd,fixed,fixed
        zerofill,precision=3}
    },
    every axis plot post/.append style={black, mark=none}
    }
  \begin{tikzpicture}
    \begin{axis}
      \addplot table {plot-K3sym045.dat};
    \end{axis}
    \node at (1.3,-.4) {\footnotesize $(p,q) = (0.1,0.045)$};
    \begin{axis}[xshift=3cm,yticklabels={,,}]
      \addplot[] table {plot-K3sym047.dat};
    \end{axis}
    \node at (4.3,-.4) {\footnotesize $(p,q)=(0.1,0.047)$};
    \begin{axis}[xshift=6cm,yticklabels={,,}]
       \addplot[] table {plot-K3sym05.dat};
     \end{axis}
     \node at (7.3,-.4) {\footnotesize $(p,q)= (0.1,0.05)$};
     \begin{axis}[xshift=9cm,yticklabels={,,}]
       \addplot[] table {plot-K3sym06.dat};
     \end{axis}
     \node at (10.3,-.4) {\footnotesize $(p,q)=(0.1,0.06)$};
\end{tikzpicture}

  \caption{Plots of $f_{p,q}$ from (\ref{eq:K3-sym-f}) for $p=0.1$ and
    various values of $q$.}
  \label{fig:K3-sym-f}
\end{figure}
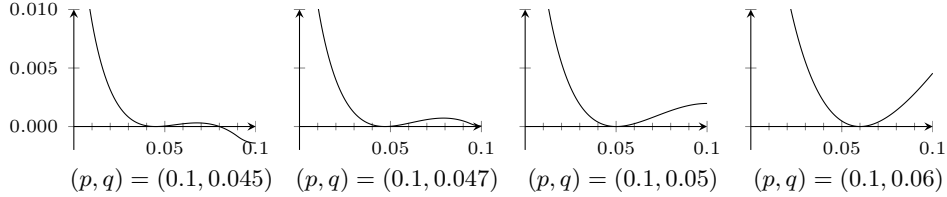

  Suppose $f(p) \geq 0$. We have
  \[
  f'(x) = I'_p(x) - \frac{I'_p(q)}{q}(2q - x)_+
  \]
  and
  \[
  f''(x) = I''_p(x) + \frac{I'_p(q)}{q} 1_{x < 2q} = \frac{1}{x(1-x)}
  + \frac{I'_p(q)}{q} 1_{x < 2q}.
  \]
  Since $p \leq 1/2$, $f''(x)$ is decreasing for $0 <x < \min\{p,
  2q\}$. Clearly $f''$ is positive near $x = 0$. We consider two
  cases.

  Case I: $f''(x) > 0$ for all $0 < x < \min\{p, 2q\}$. Then $f$ is
  convex on $(0, \min\{p, 2q\})$. We know that $f(q) = f'(q) =
  0$. Then $f(x) \geq 0$ for all $x \in [0, \min\{p,2q\}]$. If $2q
  \geq p$, then we are done, otherwise, note that $f(x) = I_p(x) -
  I_p(q) - qI'_p(q)/2$ for $x \in [2q, p]$, and it is decreasing on
  this interval. Since we assumed that $f(p) \geq 0$, we obtain $f(x)
  \geq 0$ for all $x \in [0,p]$.

  Case II: there is some $x_0 \in (0,\min\{p, 2q\})$ such that
  $f''(x_0) = 0$. So $f$ is convex on $(0,x_0)$ and concave on $(x_0,
  \min\{p, 2q\})$. We assumed that $f(p) \geq 0$, so $f(\min\{p, 2q\})
  \ge 0$ since if $2q < p$ then $f$ is decreasing on $(2q, p)$. Since
  $f(q) = f'(q) = 0$, an analysis of the convexity of $f$ shows that
  it is nonnegative on $[0,p]$.
\end{proof}

For the sparse limit $p \to 0$, the proof of the first half of
Theorem~\ref{thm:LT-h-K3} is nearly identical. It follows from the
following two propositions, whose proofs we omit.

\begin{proposition}
    Let $0 \leq r \leq 1$ be such that
    \begin{equation}\label{eq:K3-h-hyp}
    h(x) \geq h(r) + \frac{-h'(r)}{2r}( ( 2r - x)_+^2 -r^2) \quad
    \forall x \in [0,1].
  \end{equation}
  Then $W \equiv r$ is the unique minimizer of $\LT(K_3, r)$.
\end{proposition}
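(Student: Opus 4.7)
The plan is to mirror the proof of Proposition~\ref{prop:K3=} verbatim, replacing $I_p$ by $h$ and $q$ by $r$. Let $W$ be a graphon with $t(K_3, W) \le r^3$, and set $U := (2r - W)_+$. Since $U, W \ge 0$ and $U + W \ge 2r$ pointwise, Lemma~\ref{lem:goodman} gives
\[
t(K_3, (2r - W)_+) \ge 2r^3 - t(K_3, W) \ge r^3.
\]
A minor caveat: when $r > 1/2$, $(2r - W)_+$ may exceed $1$ and is thus not strictly a graphon, but the upper bound $U, W \le 1$ is never invoked in the proof of Lemma~\ref{lem:goodman}; only nonnegativity and the sum lower bound are used. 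Applying Proposition~\ref{prop:holder} to the symmetric nonnegative function $(2r - W)_+$ then yields $\EE[(2r - W)_+^2] \ge t(K_3,(2r-W)_+)^{2/3} \ge r^2$.

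Integrating the pointwise hypothesis \eqref{eq:K3-h-hyp} applied at $x = W(x, y)$,
\[
\EE[h(W)] \ge h(r) + \frac{-h'(r)}{2r}\bigl(\EE[(2r - W)_+^2] - r^2\bigr) \ge h(r),
\]
because $-h'(r) = -\log r \ge 0$ for $r \in (0, 1]$ (the endpoint cases $r=0,1$ being trivial). Thus $W \equiv r$ attains the minimum $h(r)$ of $\LT(K_3, r)$. For uniqueness I would follow the equality analysis at the end of Proposition~\ref{prop:K3=}: setting $g(x) := h(x) - h(r) + \tfrac{h'(r)}{2r}\bigl((2r - x)_+^2 - r^2\bigr)$, the hypothesis says $g \ge 0$ on $[0,1]$, with $g(r) = g'(r) = 0$, and $g''(x) = 1/x + h'(r)/r$ on $(0, 2r)$ is strictly decreasing, so $g$ has at most one additional zero $x_0$. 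Equality throughout the chain above forces $W \in \{r, x_0\}$ almost everywhere, and positive-measure mass at $x_0 \ne r$ is incompatible with $t(K_3, W) = r^3$ by an argument analogous to the one sketched in Proposition~\ref{prop:K3=}.

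The main obstacle is making the uniqueness step rigorous. The corresponding argument in Proposition~\ref{prop:K3=} is marked ``details omitted'', and completing it requires a case split on the sign of $x_0 - r$ together with use of the equality conditions in Goodman's inequality and Proposition~\ref{prop:holder} to pin down the structure of $W$ on the level set $\{W = x_0\}$. The existence half of the statement, by contrast, is a verbatim transcription of the $I_p$ proof and presents no difficulty.
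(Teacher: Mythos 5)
Your proof is correct and is exactly the argument the paper intends: the paper explicitly omits this proof, noting it is ``nearly identical'' to that of Proposition~\ref{prop:K3=}, and your transcription (Goodman's lemma applied to $(2r-W)_+$, then Proposition~\ref{prop:holder}, then the tangent-line inequality, then the same convexity-based equality analysis) is that identical argument, with the helpful extra observation that Lemma~\ref{lem:goodman} never uses the upper bound $U \le 1$. The uniqueness step you flag as needing more work is left at the same level of detail in the paper itself.
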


\begin{fact}
  The inequality \eqref{eq:K3-h-hyp} holds for all $x \in [0,1]$ if
  and only if holds for $x = 1$, which holds if and only if $r \geq
  \ovr = 0.466\dots$, where $\ovr$ satisfies $h(\ovr) + \frac12 \ovr h'(\ovr) = 0$.
\end{fact}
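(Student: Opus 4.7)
The plan is to mirror the proof of Fact~\ref{fact:K3-hyp-ineq} with $h$ playing the role of $I_p$. I set
\[
g_r(x) := h(x) - h(r) + \frac{h'(r)}{2r}\left((2r-x)_+^2 - r^2\right),
\]
so that \eqref{eq:K3-h-hyp} is equivalent to $g_r(x) \ge 0$ on $[0,1]$. The sign conventions carry over cleanly: $h'(r) = \log r \le 0$ plays the role of $I_p'(q) \le 0$, and $h''(x) = 1/x > 0$ mirrors the convexity of $I_p$. Direct computation gives $g_r(r) = g_r'(r) = 0$ and $g_r''(x) = 1/x + (\log r)/r$ on $(0, 2r)$, which starts at $+\infty$ and is decreasing; on $[2r, 1]$ (nonempty only when $r \le 1/2$), $g_r$ reduces to $h(x) - h(r) - rh'(r)/2$, which is decreasing in $x$ since $h'(x) = \log x \le 0$ on $(0,1]$.

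For the first ``iff'', the only-if direction is trivial; for the if direction, I split $[0,1]$ at $\min(1, 2r)$ and repeat the Case I / Case II dichotomy from the proof of Fact~\ref{fact:K3-hyp-ineq}. In Case I, where $g_r'' > 0$ throughout $(0, \min(1, 2r))$, $g_r$ is convex with a double root at $x = r$, so $g_r \ge 0$ there. In Case II, where $g_r''$ changes sign, $g_r$ is convex-then-concave, and using $g_r(r) = g_r'(r) = 0$ together with $g_r(\min(1,2r)) \ge 0$, the standard convexity/concavity analysis forces $g_r \ge 0$ on $(0, \min(1, 2r)]$. When $2r < 1$, the monotonicity of $g_r$ on $[2r, 1]$ extends this to the rest of $[0,1]$ under the assumption $g_r(1) \ge 0$.

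For the second ``iff'', I compute $g_r(1)$ explicitly. When $r \le 1/2$ we have $(2r-1)_+ = 0$, so
\[
g_r(1) = -h(r) - \tfrac{1}{2} r h'(r) = -\paren{\tfrac{3}{2} r \log r - r + 1}.
\]
Setting $\phi(r) := \tfrac{3}{2} r \log r - r + 1$, one notes $\phi(0) = 1 > 0$, $\phi(1) = 0$, and $\phi'(r) = \tfrac{3}{2}\log r + \tfrac{1}{2}$ vanishes uniquely at $r = e^{-1/3}$, so $\phi$ has a single interior zero $\ovr \in (0, e^{-1/3}) \subset (0, 1/2)$ with $\phi(r) \le 0$ iff $r \ge \ovr$. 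Hence on $(0, 1/2]$, $g_r(1) \ge 0 \iff r \ge \ovr$. Since $\ovr < 1/2$, all that remains is to verify $g_r(1) \ge 0$ for $r \in (1/2, 1]$; rewriting this as $h(r) \le \frac{\log r\, (3r-1)(r-1)}{2r}$, both sides vanish at $r = 1$ and the inequality holds strictly at $r = 1/2$, so a routine single-variable calculus check closes the case. This last step is the main quantitative wrinkle and the only place the argument goes beyond a direct translation of the proof of Fact~\ref{fact:K3-hyp-ineq}; I expect it to be entirely routine.
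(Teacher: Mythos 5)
Your argument is the intended one: the paper omits this proof precisely because it is the $p\to 0$ analogue of Fact~\ref{fact:K3-hyp-ineq}, and your Case I/Case II convexity analysis of $g_r$ (with the simplification that $h''(x)=1/x$ is decreasing on all of $(0,1]$, so no analogue of the restriction $p\le 1/2$ is needed) carries it through correctly, as does the reduction of $g_r(1)\ge 0$ to the sign of $\phi(r)=\tfrac32 r\log r - r +1$. One slip: $e^{-1/3}\approx 0.717$, so $(0,e^{-1/3})\not\subset(0,1/2)$; the needed bound $\ovr<1/2$ should instead be verified directly, e.g.\ from $\phi(1/2)=\tfrac34\log\tfrac12+\tfrac12<0$, after which your monotonicity analysis of $\phi$ correctly gives $\phi(r)\le 0$ iff $r\ge\ovr$. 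The remaining check that $g_r(1)\ge 0$ for $r\in(1/2,1]$, which you defer, is genuinely needed but does hold (both sides agree to first order at $r=1$ with the right sign on the second-order term, and the inequality is strict at $r=1/2$), so the proof is complete modulo that routine verification.
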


\subsection{Symmetry breaking phase} \label{sec:K3-brk}

Now we explain the lower curve $\uq$ in Figure~\ref{fig:K3}.
It is obtained by by restricting the variational problem
$\LT_p(K_3, q)$ to graphons $W$ of the form $\BIP_{a,b}$, where
$\BIP_{a,b}$, for $0\leq a, b \leq 1$, is defined by
\begin{equation} \label{eq:BIP}
\BIP_{a,b}(x,y) := \begin{cases} a & \text{if } (x,y) \in [0,1/2]^2
    \cup (1/2,1]^2, \\
    b & \text{if } (x,y) \in [0,1/2] \x (1/2,1] \cup (1/2,1] \x
    [0,1/2].
  \end{cases}
\end{equation}
There is symmetry breaking as long as we can find $0 \le a, b \le p$
satisfying
\begin{equation} \label{eq:BIP-K3-goal}
\EE[I_p(\BIP_{a,b})] = \frac12 I_p(a) + \frac12 I_p(b) < I_p(q)
\end{equation}
and
\begin{equation}\label{eq:BIP-K3-constr}
t(K_3,\BIP_{a,b}) =
\frac14 a^3 + \frac34 ab^2 \le q^3.
\end{equation}
We can assume that $0
\leq a \leq q \leq b \leq p$, since otherwise swapping $a$ and $b$
reduces $t(K_3, W)$ (observe that $t(K_3, \BIP_{a,b}) - t(K_3,
\BIP_{b,a}) = \frac14 (a-b)^3$) while keeping $\EE[I_p(W)]$
constant.

Set $b = \sqrt{(4q^3-a^3)/(3a)}$ so that $t(K_3,\BIP_{a,b}) =
q^3$. There is symmetry breaking if
\begin{equation} \label{eq:K3-f-BIP}
f(x)  := f_{p,q} (x) := \frac{1}{2}I_p(x) + \frac12
I_p\left(\sqrt{\frac{4q^3-x^3}{3x}} \right) - I_p(q)
\end{equation}
is negative for some $0 \leq x \leq q$, where $f$ is only defined for
$\sqrt{(4q^3-x^3)/(3x)} \leq p$. Some representative examples of $f$
are plotted in Figure~\ref{fig:K3-f-BIP}. For every $p$, and
sufficiently small $q$, $f(x)$ becomes negative in a region away from
$x = q$.


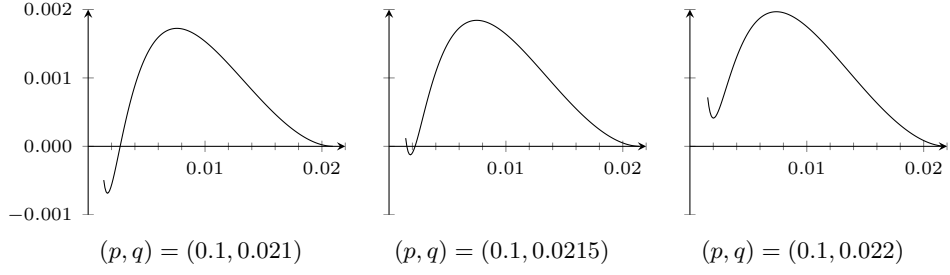
\begin{figure}
  \pgfplotsset{
    every axis/.append style={
      width=5cm,
      axis x line=middle,
      axis y line=left,
      scaled ticks=false,
      xtick={0,.01,.02},
      minor x tick num = 4,
      ymin=-.001,ymax=.002,
      xmin=0,xmax=.022,
      xticklabel style={font=\tiny,/pgf/number format/.cd,fixed,precision=2},
      yticklabel style={font=\tiny,/pgf/number format/.cd,fixed,fixed
        zerofill,precision=3}
    },
    every axis plot post/.append style={black, mark=none}
    }
  \begin{tikzpicture}
    \begin{axis}
      \addplot table {plot-K3brk021.dat};
    \end{axis}
    \node at (1.5,-.5) {\footnotesize $(p,q) = (0.1,0.021)$};
    \begin{axis}[xshift=4cm,yticklabels={,,}]
      \addplot[] table {plot-K3brk0215.dat};
    \end{axis}
    \node at (5.5,-.5) {\footnotesize $(p,q) = (0.1,0.0215)$};
    \begin{axis}[xshift=8cm,yticklabels={,,}]
       \addplot[] table {plot-K3brk022.dat};
     \end{axis}
     \node at (9.5,-.5) {\footnotesize $(p,q) = (0.1,0.022)$};
\end{tikzpicture}
  \caption{The plot of $f_{p,q}$ from (\ref{eq:K3-f-BIP}) for $p=0.1$
    and various values of $q$.}
    \label{fig:K3-f-BIP}
\end{figure}

Now we prove the claims in Theorem~\ref{thm:LT-K3} more
rigorously. For every $p > 0$, if $q$ is sufficiently small so that
$\frac12 I_p(0) < I_p(q)$, then $W = \BIP_{0,p}$ satisfies $t(K_3,W) = 0$
while $\EE[I_p(W)] = \frac12 I_p(0) < I_p(q)$, so that $\LT_p(K_3, q)
< I_p(q)$.

The argument in the previous paragraph does not give the optimal $\uq$
in Theorem~\ref{thm:LT-K3}. To prove that
$\uq$ can be chosen so that $\lim_{p\to 0} \uq(p)/p = 0.209\dots$,
it suffices, by \eqref{eq:limLT}, to prove the second half of
Theorem~\ref{thm:LT-h-K3}, that $\LT(K_3, r) < h(r)$ for all $r < r_1
= 0.209\dots$. As before, we seek $0 \le a \le r \le b \le 1$ with
\[
\frac12 h(a) + \frac12 h(b) < h(r)
\]
and
\[
\frac14 a^3 + \frac34 ab^2 \leq r^3.
\]
Let
\begin{equation}
    \label{eq:K3-h-f-BIP}
  f(x) := f_{r} (x) := \frac{1}{2}h(x) + \frac12
  h\left(\sqrt{\frac{4r^3-x^3}{3x}} \right) - h(r).
\end{equation}
\begin{figure}
  \pgfplotsset{
    every axis/.append style={
      width=5cm,
      axis x line=middle,
      axis y line=left,
      scaled ticks=false,
      xtick={0,.1,.2},
      minor x tick num = 4,
      ymin=-.005,ymax=.02,
      xmin=0,xmax=.22,
      xticklabel style={font=\tiny,/pgf/number format/.cd,fixed,precision=2},
      yticklabel style={font=\tiny,/pgf/number format/.cd,fixed,fixed
        zerofill,precision=3}
    },
    every axis plot post/.append style={black, mark=none}
    }
  \begin{tikzpicture}
    \begin{axis}
      \addplot table {plot-K3hbrk2.dat};
    \end{axis}
    \node at (1.5,-.5) {\footnotesize $r=0.2$};
    \begin{axis}[xshift=4cm,yticklabels={,,}]
      \addplot[] table {plot-K3hbrk209.dat};
    \end{axis}
    \node at (5.5,-.5) {\footnotesize $r=0.209$};
    \begin{axis}[xshift=8cm,yticklabels={,,}]
       \addplot[] table {plot-K3hbrk21.dat};
     \end{axis}
     \node at (9.5,-.5) {\footnotesize $r=0.21$};
\end{tikzpicture}
  \caption{The plot of $f_{r}$ from (\ref{eq:K3-h-f-BIP}) for various
    values of $r$.}
    \label{fig:K3-h-f-BIP}
\end{figure}
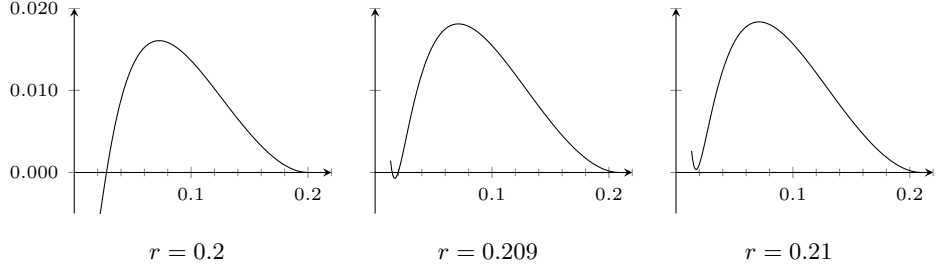

See Figure~\ref{fig:K3-h-f-BIP} for some examples of plots of $f_r$ (as before,
plotted for values of $x \leq r$ satisfying $\sqrt{(4r^3-x^3)/(3x)} \leq 1$). At the
critical $r = r_1 = 0.209\dots$, there exists $0 < a_1 <
r_1 < b_1 < 1$ such that $\frac14 a_1^3 + \frac34 a_1b_1^2 = r_1^3$ and $\frac12
h(a_1) + \frac12 h(b_1) = h(r_1)$. Now for any $0 \leq r < r_1$, let
$s = r/r_1$, so that $(a,b) = (sa_1, sb_1)$ satisfies $\frac14 a^3 + \frac34 ab^2 = r^3$. Note that
\begin{align*}
h(sx) &= sx \log (sx) - sx + 1
\\&= s (x\log x - x + 1) + (s \log s) x - s
+ 1 = sh(x) + (s \log s) x - s + 1.
\end{align*}
We have
\begin{align*}
\frac12 h(a) + \frac12 h(b) - h(r)
&= \frac12 h(sa_1) + \frac12 h(sb_1) - h(sr_1)
\\&= s (\frac12 h(a_1) + \frac12 h(b_1) - h(r_1)) + (s\log s)( \frac12
a_1 + \frac12 b_1 - r_1)
\\&< 0
\end{align*}
where we know $(a_1+b_1)/2 > r$ from
\begin{align*}
\left(\frac12 a_1 + \frac12 b_1\right)^3 - r_1^3
&= \left(\frac12 a_1 + \frac12 b_1\right)^3 - \frac14 a_1^3 - \frac34
a_1b_1^2
\\&= \left( \frac12 b_1 - \frac12 a_1\right)^3 > 0
\end{align*}
It follows that $\LT(K_3, r) < I_p(r)$ for all $0 < r < r_1 = 0.209\dots$.

\section{General subgraph lower tails} \label{sec:LT-H}

In this section we prove Theorems~\ref{thm:LT-H} and
\ref{thm:LT-h-H}. I will give the details only for
Theorem~\ref{thm:LT-h-H} concerning the sparse limit $\LT(H, r)$
as it is somewhat cleaner and contains all the ideas.
Theorem~\ref{thm:LT-H} regarding $\LT_p(H,q)$ can be proved
analogously by considering sufficiently small but fixed values of $p$.

\subsection{Replica symmetry}

For any graph $H$ and graphon $W$, we define the functional derivative $t'(H, W)$ to be the symmetric measurable function given by
\begin{equation}\label{eq:t'}
t'(H, W) = \sum_{e \in E(H)} t_e(H, W)
\end{equation}
where for each $ab \in E(H)$, we define the graphon
\begin{equation} \label{eq:t_ab}
t_{ab}(H, W)(x_a, x_b) := \int_{[0,1]^{V(H)\setminus\{a,b\}}}
\prod_{ij \in E(H) \setminus\{ab\}} W(x_i, x_j) \prod_{i \in V(H)
  \setminus\{a,b\}} dx_i.
\end{equation}
For example,
\[
t'(K_3,W)(x,y) = 3\int_{[0,1]} W(x,z)W(y,z) \, dz.
\]
For any symmetric measurable $U \colon [0,1]^2 \to [-1,1]$, and
$\delta \to 0$, we have
\[
t(H, W + \delta U) = t(H, W) + \delta \, \EE[t'(H, W) U] + O(\delta^2),
\]
which justifies calling $t'(H, W)$ the functional derivative.

\begin{lemma} \label{lem:Lagrange}
  Let $H$ be a graph and $0 < r < 1$. The variational problem
  $\LT(H, r)$ attains its minimum for some graphon $W$, and any
  such $W$ satisfies the following Lagrange multiplier condition: for
  some $\lambda \geq 0$, one has
  \[
  h'(W(x,y)) + \lambda t'(H, W)(x,y) = 0, \qquad \text{a.e.-}(x,y) \in [0,1]^2.
  \]
\end{lemma}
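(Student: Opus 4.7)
My plan has three steps: existence of a minimizer, activity of the constraint, and the Lagrange equation itself, followed by a boundary cleanup.

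For existence, I would invoke the compactness of graphons modulo measure-preserving transformations in the cut distance (Lov\'asz--Szegedy). The map $W \mapsto t(H, W)$ is cut-continuous with Lipschitz constant $e(H)$, so the feasible set is closed; it is nonempty since $W \equiv r$ is feasible. The functional $W \mapsto \EE[h(W)]$ is lower semicontinuous in cut distance, being of the form $\EE[\varphi(W)]$ for the bounded convex $\varphi = h$ on $[0,1]$. A minimizing sequence therefore has a cut-convergent subsequence whose limit attains the minimum. To see that the constraint is active at any minimizer $W$: if $t(H, W) < r^{e(H)}$, then $W_\delta := W + \delta(1 - W)$ stays feasible for small $\delta > 0$ by continuity, while $\EE[h(W_\delta)] < \EE[h(W)]$ since $h$ is strictly convex on $[0,1]$ with minimum at $1$ and $W \equiv 1$ is itself infeasible. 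This contradicts minimality, so $t(H, W) = r^{e(H)}$.

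For the first-order condition, I would take symmetric bounded perturbations $U$ supported on $\{a < W < 1-a\}$ for $a > 0$, so that $W + \delta U \in [0,1]$ for small $|\delta|$. Expanding
\[
\EE[h(W + \delta U)] = \EE[h(W)] + \delta \, \EE[h'(W) U] + O(\delta^2),
\]
\[
t(H, W + \delta U) = t(H, W) + \delta \, \EE[t'(H, W) U] + O(\delta^2),
\]
minimality yields the implication $\EE[t'(H, W) U] \le 0 \Rightarrow \EE[h'(W) U] \ge 0$. Applying this to both $U$ and $-U$ when $\EE[t'(H, W) U] = 0$ gives $\EE[h'(W) U] = 0$, so $h'(W)$ and $t'(H, W)$ are $L^2$-proportional on the interior: $h'(W) = c \, t'(H, W)$ a.e.\ on $\{0 < W < 1\}$ for some $c \in \RR$. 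Testing the one-sided inequality with $U = -t'(H, W) \, \one_{\{a < W < 1-a\}}$ forces $c \le 0$, so setting $\lambda := -c \ge 0$ yields $h'(W) + \lambda t'(H, W) = 0$ a.e.\ on $\{0 < W < 1\}$, consistent with $t'(H, W) \ge 0$ and $h'(W) = \log W \le 0$ there.

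To extend to full measure, I would show $\{W = 0\}$ has measure zero. On such a symmetric set $A$ of positive measure, replacing $W$ by $W + \epsilon \one_A$ changes $\EE[h(W)]$ by $\mu(A)(\epsilon \log \epsilon - \epsilon)$, which is negative and of superlinear magnitude in $\epsilon$ since $h'(0^+) = -\infty$, while the first-order change in $t(H, W)$ is linear in $\epsilon$ and can be cancelled by a compensating decrease of $W$ on a region where $W > 0$, at a cost also linear in $\epsilon$; the superlinear gain dominates, strictly improving the objective while preserving feasibility and contradicting minimality. On $\{W = 1\}$, $h'(1) = 0$ and the variational inequality forces $t'(H, W) = 0$, so the Lagrange equation holds trivially. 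I expect this boundary analysis for $\{W = 0\}$ to be the main obstacle: turning the blow-up of $h'$ at $0$ into an honest strict decrease of $\EE[h(W)]$ under the nonlinear $t(H, \cdot)$ constraint requires a careful compensation argument that exploits the finiteness of $h(0) = 1$.
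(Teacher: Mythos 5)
Your proposal follows essentially the same route as the paper: existence via compactness of the graphon space and lower semicontinuity of $\EE[h(\cdot)]$ under cut convergence, then a first-variation argument yielding the implication that $\EE[t'(H,W)U]<0$ forces $\EE[h'(W)U]\ge 0$, from which the Lagrange condition is extracted. The paper proves exactly this implication (justifying the limit interchange by convexity of $h$ and monotone convergence) and then asserts the lemma ``follows easily''; you are right that the remaining extraction is not entirely trivial, and your analysis of $\{W=0\}$ is genuinely needed --- the stated conclusion implicitly asserts $W>0$ a.e.\ (else $h'(W)=-\infty$), and the mechanism you identify (a superlinear entropy gain of order $\epsilon\log(1/\epsilon)$ against a linear repair cost, paid on a positive-measure set where both $W$ and $t'(H,W)$ are bounded below, which exists because $\EE[t'(H,W)W]=m\,t(H,W)>0$ unless the constraint is slack) is the correct one. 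Two points to tighten. First, minimality directly gives the implication only for strictly negative $\EE[t'(H,W)U]$, since when $\EE[t'(H,W)U]=0$ the second-order term can push $t(H,W+\delta U)$ above $r^{e(H)}$; to run your orthogonality step you should perturb $U$ by $\epsilon V$ with $\EE[t'(H,W)V]<0$ and let $\epsilon\to 0$. Second, on $\{W=1\}$ the admissible perturbations are one-sided and $h'(1)=0$, so perturbations supported on that set alone yield $\EE[h'(W)U]=0$ and give no information; to force $t'(H,W)=0$ there you need a mixed perturbation decreasing $W$ on a subset of $\{W=1\}$ where $t'(H,W)>0$ and increasing it where $W$ is bounded away from $1$, which strictly decreases the objective at first order while keeping the constraint. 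With these repairs the argument is complete and, if anything, more careful than the paper's.
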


\begin{proof}
  That the minimum of $\LT(H, r)$ is always attained follows from the
  compactness of the space of graphons with respect to the cut
  distance and the convexity of $h$, as was already observed in
  \cite{CV11}.\footnote{We sketch here an alternate proof of the fact
    that the minimum is always attained. Let $W_n$ be a sequence of
    graphons with $t(H,W_n) \ge r^{e(H)}$ and $\EE[h(W_n)] \to
    \LT(H,r)$. By compactness of the space of
  graphons~\cite{LS07}, there exists a subsequential limit $W$ so that
  $\delta_\square(W_n, W) \to 0$ along some subsequence. Restrict to
  this subsequence. We have $t(H, W_n) \to t(H, W)$, so that $t(H,W)
  \ge r^{e(H)}$. It remains to show that $\EE[h(W)] \leq \lim
  \EE[h(W_n)] = \LT(H,r)$. We do not lose
  anything by assuming that $\norm{W_n - W}_\square \to 0$. Let
  $\cP_m$ denote the partition of the unit interval $[0,1]$ into $m$
  equal-length intervals. Let $W_{\cP_m}$ denote $W$ with its value
  inside each $I_i \x I_j$ replaced by its average, for every $I_i,
  I_j \in \cP_m$. Define $(W_n)_{\cP_m}$ similarly. Then $\norm{W_n -
    W}_\square \to 0$ implies that $(W_n)_{\cP_m} \to W_{\cP_m}$
  pointwise a.e.\ as $n \to \infty$. By convexity, we have $\lim_{n
    \to \infty} \EE[h(W_n)] \geq \liminf_{n \to \infty}
  \EE[h((W_n)_{\cP_m})] = \EE[h(W_{\cP_m})]$. Furthermore, $W_{\cP_m}
  \to W$ pointwise a.e.\ by the Lebesgue density theorem, so $\lim_{m
    \to \infty} \EE[h(W_{\cP_m})] = \EE[h(W)]$. It follows that
  $\EE[h(W)] \leq \lim \EE[h(W_n)] = \LT(H,r)$, as desired.}

  Suppose $W$ minimizes $\LT(H, r)$. We claim that for any symmetric
  measurable function $U \colon [0,1]^2 \to [-1,1]$ such that $0 \leq
  W + U \leq 1$ and $\EE[t'(H, W) U] < 0$, one has $\EE[h'(W) U] \geq
  0$. Indeed, consider the graphon $W + \delta U$ for $\delta \searrow
  0$. One has
  \[
  t(H, W + \delta U) - t(H, W) = \delta \EE[t'(H, W) U] + O(\delta^2).
  \]
  Thus $t(H, W + \delta U) < t(H, W) \leq r^{e(H)}$ for sufficiently
  small $\delta > 0$, and hence $\EE[h(W + \delta U)] \geq
  \EE[h(W)]$ since $W$ minimizes $\LT(H, r)$. On the other hand,
  \[
  \lim_{\delta \to 0} \frac{\EE[h(W + \delta U) - h(W)]}{\delta} = \EE[h'(W) U],
  \]
  so that $\EE[h'(W) U] \geq 0$ as claimed. The interchange of limit
  and expectation above can be justified by writing $U = U_+ - U_-$,
  where $U_+ = \max\{U, 0\}$ and $U_- = \max\{-U, 0\}$. Since $h$ is
  convex, $(h(W + \delta U_+) - h(W))/\delta$ is pointwise
  monotonically decreasing as $\delta \searrow 0$, and likewise $(h(W
  - \delta U_-) - h(W))/\delta$ is pointwise monotonically
  increasing. So the interchange of limit and expectation follows from
  the monotone convergence theorem.

  The lemma follows easily from the claim we just proved.
\end{proof}

\begin{lemma} \label{lem:W-lb}
  Let $H$ be a graph with $m$ edges, and $0 < r \leq 1$. Let $W$
  minimize $\LT(H, r)$. Then $W \geq r^{mr^{-m}}$ almost everywhere.
\end{lemma}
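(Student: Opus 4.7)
The plan is to apply the Lagrange multiplier characterization from Lemma~\ref{lem:Lagrange} and convert it into a pointwise lower bound via a short case analysis on $c$, the essential infimum of $W$. The case $r = 1$ is immediate, so assume $0 < r < 1$. A scaling argument (scaling $W$ up slightly decreases the objective whenever $W < 1$ on a positive-measure set) shows that the constraint must be tight at the minimizer: $t(H, W) = r^m$. Lemma~\ref{lem:Lagrange} then supplies some $\lambda > 0$ with $\log W = -\lambda\, t'(H, W)$ a.e. Since $t'(H, W)(x, y) = \sum_{ab \in E(H)} t_{ab}(H, W)(x, y)$ and each $t_{ab}(H, W) \in [0, 1]$, the pointwise bound $t'(H, W) \leq m$ gives the crude estimate $W \geq e^{-\lambda m}$ a.e., reducing the problem to proving $\lambda m \leq m r^{-m} \log(1/r)$.

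Multiplying the Lagrange equation by $W$, integrating, and using the identity $\EE[W \cdot t_{ab}(H, W)] = t(H, W)$ for every edge $ab$ yields $\EE[W \log W] = -\lambda m r^m$, so $\lambda m r^m = \EE[-W \log W]$ is an average of $-x \log x$ over values of $W$ in $[c, 1]$. The two features of $-x \log x$ on $[0, 1]$ that the proof exploits are that its maximum is $1/e$ (attained at $x = 1/e$) and that it is decreasing on $[1/e, 1]$. If $c \geq 1/e$, monotonicity yields $-W \log W \leq -c \log c$ pointwise, so $\lambda m r^m \leq -c \log c$; combining this with the crude bound $-\log c \leq \lambda m$ and dividing by $-\log c > 0$ produces the clean intermediate bound $c \geq r^m$, which is already $\geq r^{m r^{-m}}$ since $r \leq 1$ and $m \leq m r^{-m}$.

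If instead $c < 1/e$, the uniform bound $-W \log W \leq 1/e$ gives $\lambda \leq 1/(e m r^m)$ and hence $c \geq \exp(-1/(e r^m))$. The hypothesis $c < 1/e$ combined with this lower bound forces $\exp(-1/(e r^m)) < 1/e$, equivalently $r^m < 1/e$, which in turn implies $m \log(1/r) > 1 > 1/e$; this last inequality is exactly what is required to conclude $1/(e r^m) \leq m r^{-m} \log(1/r)$, giving $\exp(-1/(e r^m)) \geq r^{m r^{-m}}$. The main subtlety is that the two cases reach the same final bound by quite different routes: Case~1 produces a sharper intermediate bound that must then be weakened, whereas Case~2 relies on a cruder uniform estimate but exploits the automatic implication $c < 1/e \Rightarrow r^m < 1/e$ to close the gap.
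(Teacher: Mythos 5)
Your proof is correct, and it reaches the stated bound. It rests on the same three ingredients as the paper's argument --- the Lagrange condition $\log W = -\lambda\, t'(H,W)$ from Lemma~\ref{lem:Lagrange}, the pointwise bound $0 \le t'(H,W) \le m$, and the identity $\EE[W\, t'(H,W)] = m\, t(H,W)$ together with the observation that a minimizer cannot have $t(H,W) < r^m$ --- but it deploys them in the opposite direction. The paper argues by contradiction: assuming $W < r^{mr^{-m}}$ on a positive-measure set, it uses the pointwise Lagrange condition at those points to bound $\lambda$ from \emph{below}, deduces $t'(H,W)\,W < mr^m$ a.e., and contradicts the constraint. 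You instead first show the constraint is tight, integrate the Lagrange identity against $W$ to obtain the exact value $\lambda m r^m = \EE[-W\log W]$, bound this from \emph{above} by a case split on whether the essential infimum $c$ exceeds $1/e$, and convert the resulting upper bound on $\lambda$ into the lower bound $W \ge e^{-\lambda m}$. Your Case~1 in fact yields the sharper conclusion $c \ge r^m$ when $c \ge 1/e$; the weaker exponent $mr^{-m}$ is only needed to absorb Case~2. Two cosmetic points: Lemma~\ref{lem:Lagrange} supplies $\lambda \ge 0$ rather than $\lambda > 0$ (strict positivity follows in one line, since $\lambda = 0$ forces $W \equiv 1$, which violates the constraint for $r<1$), and dividing by $-\log c$ in Case~1 presumes $c < 1$, which holds for the same reason. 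Neither affects the validity of the argument.
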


\begin{proof}
  Let $c = r^{mr^{-m}}$.
  Suppose on the contrary that $W < c$ on a set of positive
  measure. Let $\lambda$ be the Lagrange multiplier in
  Lemma~\ref{lem:Lagrange}. From \eqref{eq:t'} we have $t'(H, W) \leq m$ everywhere. By
  considering a positive-measure set of $(x,y)$ such that $W(x,y) <
  c$, we find
  \[
  0 = h'(W(x,y)) + \lambda t'(H, W)(x,y) < h'(c) + m\lambda.
  \]
  So that
  \[
  \lambda > \frac{-h'(c)}{m} = \frac{\log(1/c)}{m}.
  \]
  Therefore, up to a set of measure zero, for every $(x,y)$ with $W(x,y) \geq r^m$, we have
  \[
  t'(H, W)(x,y) = \frac{- h'(W(x,y))}{\lambda}
                < \frac{- m h'(r^m)}{\log(1/c)}
                = \frac{m \log(r^{-m})}{\log (r^{-mr^{-m}})}
                = mr^{m}.
  \]
  On the other hand, for every $(x,y)$ with $W(x,y) < r^m$, we have $t'(H,W)(x,y) W(x,y) <
  mr^m$. Thus $t'(H,W) W < m r^m$ almost everywhere. By \eqref{eq:t'} and
  \eqref{eq:t_ab}, we have
  \[
  t(H, W) = \frac{1}{m} \EE[t'(H, W) W] < r^m.
  \]
  However, any $W$ with $t(H, W) < r^m$ cannot minimize $\LT(H,
  r)$. This gives the desired contradiction.
\end{proof}

\begin{lemma} \label{lem:logW}
  If $t(H, W) \leq r^{e(H)}$, then $\EE[\log W] \leq \log r$.
\end{lemma}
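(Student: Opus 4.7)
The plan is to deduce this bound from a single application of Jensen's inequality applied to the concave logarithm. The key observation is that $t(H,W)$ is the integral of a product $\prod_{ij \in E(H)} W(x_i,x_j)$, whose logarithm is a sum of terms $\log W(x_i,x_j)$, each of which, after integrating out the other vertices, equals $\EE[\log W]$.

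More concretely, I would write
\[
\log t(H,W) = \log \int_{[0,1]^{V(H)}} \prod_{ij \in E(H)} W(x_i,x_j) \prod_{i \in V(H)} dx_i
\geq \int_{[0,1]^{V(H)}} \sum_{ij \in E(H)} \log W(x_i,x_j) \prod_{i \in V(H)} dx_i = e(H)\,\EE[\log W],
\]
where the middle inequality is Jensen's inequality, and the last identity follows because for each fixed edge $ij \in E(H)$ the integrand depends only on $x_i,x_j$, so integrating out the remaining $v(H)-2$ variables yields exactly $\EE[\log W]$. Combining with the hypothesis $t(H,W)\le r^{e(H)}$, which gives $\log t(H,W) \le e(H)\log r$, and dividing by $e(H)$ yields $\EE[\log W]\le \log r$.

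The only mild subtlety is handling the case where $W$ vanishes on a positive-measure set: then $\EE[\log W]=-\infty$ and the conclusion holds trivially, so it suffices to establish the inequality assuming $W>0$ a.e., where Jensen applies without issue (the expectation of $\log$ of the integrand is well-defined, possibly equal to $-\infty$, in which case again the inequality is immediate). I expect no real obstacle; the statement is essentially a one-line consequence of concavity of the logarithm, and the main care is just in writing out the integration over the auxiliary vertices correctly.
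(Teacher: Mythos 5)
Your argument is correct and is essentially identical to the paper's proof: both apply Jensen's inequality to the concave logarithm of the product $\prod_{ij\in E(H)} W(x_i,x_j)$ and observe that integrating $\log W(x_i,x_j)$ over all vertex variables gives $e(H)\,\EE[\log W]$. The remark about $W$ vanishing on a positive-measure set is a fine (if unnecessary in the paper's one-line treatment) piece of care, but the substance of the proof is the same.
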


\begin{proof}
  The lemma follows from Jensen's inequality:
  \begin{align*}
    m \EE[\log W] &=
  \int_{[0,1]^{V(H)}} \log \left(\prod_{ij \in E(H)} W(x_i, x_j)\right) \prod_{i
    \in V(H)} d x_i
  \\
  &\leq \log \left( \int_{[0,1]^{V(H)}} \prod_{ij \in
    E(H)} W(x_i, x_j) \prod_{i \in V(H)} dx_i \right)
  = \log t(H, W) \leq m \log r. \qedhere
\end{align*}
\end{proof}

\begin{proposition} \label{prop:LT-h-W}
  Let $H$ be a graph with $m \geq 1$ edges. Let $r = r_m$ (see Table~\ref{tab:r_m})  be the unique
  solution in the interval $(0,1)$ to the equation
  \[
  h(r^{mr^{-m}}) = h(r) + r'h(r) (\log (r^{mr^{-m}}) - \log r)
  \]
  Then $\LT(H,
  r)$ is uniquely minimized by the constant function $W \equiv r$ for
  all $r \geq r_m$.
\end{proposition}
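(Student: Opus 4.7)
The plan is to combine the two preceding lemmas through a tangent-line inequality in logarithmic coordinates. Set $c := r^{mr^{-m}}$. Any minimizer $W$ of $\LT(H, r)$ satisfies $W \ge c$ almost everywhere by Lemma~\ref{lem:W-lb}, while Lemma~\ref{lem:logW} provides $\EE[\log W] \le \log r$. What is needed is a one-variable inequality converting control on $\EE[\log W]$ into a lower bound on $\EE[h(W)]$ valid on $[c, 1]$.

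The crucial step is to show that, whenever $r \ge r_m$,
\[
h(x) \ge h(r) + rh'(r)(\log x - \log r) \qquad \text{for all } x \in [c, 1];
\]
the right-hand side is precisely the tangent line to $h$ at $x = r$ in the variable $y = \log x$. To prove this I would set $D(y) := h(e^y) - h(r) - rh'(r)(y - \log r)$ and compute $D''(y) = e^y(1+y)$, so that $D$ is concave on $(-\infty, -1)$ and convex on $(-1, \infty)$. Combined with $D(\log r) = D'(\log r) = 0$, the sign analysis of $D'(y) = y e^y - r\log r$ shows that on $(-\infty, \log r)$ the function $D$ has a unique critical point $y_1 < -1$, which is a local maximum, and hence a unique zero $y^*(r) < y_1$, with $D \ge 0$ on $[y^*(r), \infty)$. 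The defining equation for $r_m$ is exactly $D(\log c) = 0$, and the tangent-line inequality above is then equivalent to $\log c(r) \ge y^*(r)$, which one verifies amounts to $r \ge r_m$ by monotonicity in $r$.

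Given any minimizer $W$, Lemma~\ref{lem:W-lb} allows the pointwise inequality to be applied, and integration yields
\[
\EE[h(W)] \ge h(r) + rh'(r)\bigl(\EE[\log W] - \log r\bigr).
\]
Since $rh'(r) = r\log r < 0$ and $\EE[\log W] - \log r \le 0$ by Lemma~\ref{lem:logW}, the correction term is nonnegative, giving $\EE[h(W)] \ge h(r)$, so $W \equiv r$ is a minimizer. For uniqueness, equality in the pointwise inequality is strict at every $x \in [c, 1] \setminus \{r\}$ when $r > r_m$, forcing $W \equiv r$ a.e.\ At the boundary $r = r_m$ the inequality is also tight at $x = c$, so one must additionally invoke the equality case of Jensen's inequality in Lemma~\ref{lem:logW}, which requires $\prod_{ij \in E(H)} W(x_i, x_j)$ to be a.e.\ constant on $[0,1]^{V(H)}$---a rigidity condition incompatible with $W$ taking the two distinct values $r$ and $c$ on sets of positive measure.

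The main obstacle is the verification of the tangent-line inequality on the full range $[c, 1]$ precisely when $r \ge r_m$, together with the existence and uniqueness of $r_m \in (0,1)$. The shape analysis of $D$ in log-coordinates is clean, but showing that the map $r \mapsto \log c(r) - y^*(r)$ is monotone demands a careful handling of the joint dependence of $c(r) = r^{mr^{-m}}$ and $y^*(r)$ on $r$, so that the equation $D(\log c) = 0$ produces a single transition rather than multiple crossings.
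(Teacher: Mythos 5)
Your proposal follows essentially the same route as the paper: the same two lemmas (the pointwise bound $W \ge r^{mr^{-m}}$ from Lemma~\ref{lem:W-lb} and $\EE[\log W]\le \log r$ from Lemma~\ref{lem:logW}), the same tangent line to $h$ in the variable $\log x$, and the same integration step exploiting $rh'(r)<0$. The one step you flag as unresolved --- that $h(x)\ge h(r)+rh'(r)(\log x-\log r)$ holds on all of $[c(r),1]$ exactly when $r\ge r_m$, where $c(r)=r^{mr^{-m}}$ --- is closed in the paper by Fact~\ref{fct:h-exp}, which sidesteps tracking the zero $y^*(r)$: differentiating the right-hand side in $r$ at \emph{fixed} $x$ gives $-(1+\log r)(\log r-\log x)\le 0$ for $x\le r$ and $r\ge 1/e$, so validity at a point $(x_0,r_0)$ propagates to all $r\ge r_0$, and a convexity analysis in the $x$-variable (the analogue of your $D''$ computation) propagates it to all $x\in[x_0,1]$; one then only needs the elementary observation that $c(r)$ is increasing in $r$, so that $[c(r),1]\subseteq[c(r_m),1]$. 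This decouples the joint dependence of $c(r)$ and the tangent line on $r$ that you were worried about. Your uniqueness argument via the equality case of Jensen is workable, but it is simpler to note that equality in the final chain forces $\EE[\log W]=\log r$, which already rules out $W$ taking the value $c<r$ on a set of positive measure.
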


\begin{proof}
  Let $r \geq r_m$.  Let $W$ be a minimizer for $\LT(H, r)$. By
  Lemma~\ref{lem:W-lb}, $W \geq r^{mr^{-m}}$ almost everywhere. Thus it follows by
  Fact~\ref{fct:h-exp} below (and it can be checked that $r_m \geq
  1/e$) that
  \begin{equation} \label{eq:LT-h-W-ineq}
    h(W) \geq h(r) + r \log r (\log W - \log r) \qquad \text{a.e.}
  \end{equation}
  Taking expectation of both sides and using $\EE[\log W] \leq \log r$
  from Lemma~\ref{lem:logW} (note that $\log r \leq 0$), we obtain
  $\EE[h(W)] \geq h(r)$, as desired. To see that $W \equiv r$ is
  unique, suppose $W$ is another minimizer of $\LT(H, r)$. Equality
  must hold everywhere in the argument. In particular,
  \eqref{eq:LT-h-W-ineq} must hold almost everywhere, which easily
  implies that $W \equiv r$ (for the critical case $r = r_m$,
  $W$ might also take the value $r^{mr^{-m}}$, but only on a set of
  measure zero since $\EE[h(W)] = h(r)$).
\end{proof}

\begin{table}
  \centering
  \begin{tabular}{ccccccccccc}
    \toprule
    $m$ & 3 & 4 & 5 & 6 & 7 & 8 & 9 & 10 & 20 & 100
    \\
    \midrule
    $r_m$ & 0.686 & 0.735 & 0.770 & 0.795 & 0.815 & 0.831 & 0.844 &
    0.855 & 0.911 & 0.973
    \\
    \bottomrule\\
  \end{tabular}
  \caption{Some values of $r_m$ from Proposition~\ref{prop:LT-h-W}.}
  \label{tab:r_m}
\end{table}

\begin{fact} \label{fct:h-exp}
  If
  \begin{equation} \label{eq:h-exp-tangent}
    h(x) \geq h(r) + r h'(r) ( \log x - \log r)
  \end{equation}
  holds for some $(x,r)
  = (x_0, r_0)$, with $0 \leq x_0 \leq r_0 \leq 1$ and $r_0 \in [1/e, 1]$, then it holds for all
  $(x,r) \in [x_0, 1] \x [r_0, 1]$.
\end{fact}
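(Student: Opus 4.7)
The plan is to introduce
\[
g(x, r) := h(x) - h(r) - r h'(r)(\log x - \log r) = h(x) - h(r) - r\log r \cdot \log(x/r),
\]
so that \eqref{eq:h-exp-tangent} is exactly the inequality $g(x, r) \geq 0$. I would argue in two monotonicity steps: first fix $x = x_0$ and raise $r$ from $r_0$ up to any $r \in [r_0, 1]$, then fix $r$ and extend $x$ from $x_0$ upward.

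For the first step I would compute
\[
\partial_r g(x, r) = -(1 + \log r)(\log x - \log r).
\]
The hypotheses $r \geq r_0 \geq 1/e$ and $x_0 \leq r_0 \leq r$ give $1 + \log r \geq 0$ and $\log x_0 - \log r \leq 0$, so $\partial_r g(x_0, r) \geq 0$ and therefore $g(x_0, r) \geq g(x_0, r_0) \geq 0$ for every $r \in [r_0, 1]$.

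For the second step, fix $r \in [r_0, 1]$ and analyze $\varphi(x) := g(x, r)$ on $(0, 1]$. From $\varphi'(x) = \log x - r\log r/x$ and $\varphi''(x) = (x + r\log r)/x^2$ one sees that $\varphi'$ is strictly decreasing on $(0, -r\log r)$ and strictly increasing on $(-r\log r, 1]$. Together with the boundary values $\varphi'(0^+) = +\infty$ (the $-r\log r/x$ term dominating), $\varphi'(1) = -r\log r \geq 0$, and the explicit zero $\varphi'(r) = 0$, this forces $\varphi'$ to have exactly two zeros in $(0, 1]$: some $x_1(r) \in (0, -r\log r]$ and $r$ itself. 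Hence $\varphi$ rises from $\varphi(0^+) = -\infty$ to a local maximum at $x_1(r)$, descends to the local minimum $\varphi(r) = 0$, and rises again on $[r, 1]$, which forces the nonnegativity set $\{x \in (0,1] : \varphi(x) \geq 0\}$ to equal $[x^*(r), 1]$ for some $x^*(r) \in (0, r]$ (note that $\varphi(x_1(r)) \geq \varphi(r) = 0$ by monotonicity, so $x^*(r)$ exists). Since the first step gives $\varphi(x_0) = g(x_0, r) \geq 0$ while $x_0 \leq r$, we conclude $x_0 \geq x^*(r)$, hence $g(x, r) \geq 0$ for all $x \in [x_0, 1]$.

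The only genuinely delicate point is the shape analysis in the second step, specifically showing that $\varphi'$ has precisely two zeros rather than zero or more. This is exactly where the assumption $r_0 \geq 1/e$ is essential: it guarantees both that $1 + \log r \geq 0$ in the first step and that $-r\log r \leq 1/e \leq r$ in the second, so that the minimum of $\varphi'$ sits to the left of $r$. The boundary case $r = 1/e$, where $x_1(r) = r = 1/e$ and $\varphi$ is only weakly monotone (an inflection at $1/e$), is a harmless degeneration. Everything else is routine one-variable calculus.
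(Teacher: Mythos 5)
Your proof is correct and follows essentially the same route as the paper: the same monotonicity-in-$r$ computation via the partial derivative $-(1+\log r)(\log x-\log r)$, followed by the same concave-then-convex shape analysis of $f_r(x)$ using $f_r(r)=f_r'(r)=0$ and $-r\log r\le r$. Your second step simply spells out the details that the paper compresses into ``by analyzing the convexity of $f$,'' and your sign and zero-counting arguments there are accurate (the degenerate endpoint $r=1$, where the inequality reduces to $h(x)\ge 0$, is as harmless as the $r=1/e$ case you already note).
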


\begin{proof}
  The partial derivative of the RHS of \eqref{eq:h-exp-tangent} with
  respect to $r$ is $-(1+\log r) (\log r - \log x)$, which is at most
  zero as long as $x \leq r$ and $r \geq 1/e$. This shows that if
  \eqref{eq:h-exp-tangent} holds for some $(x,r) = (x_0, r_0)$ then it
  automatically holds for
  $(x,r) =(x_0, r)$ for all $r \in [r_0, 1]$.

  Let us now fix $r$. Let
  \begin{equation} \label{eq:h-exp-tangent-diff}
    f(x) := f_r(x) := h(x) - h(r) - rh'(r)(\log x - \log r).
  \end{equation}
  Some examples of $f_r$ are plotted in Figure~\ref{fig:h-exp-f}.
  We have
  \[
  f'(x) = \log x - \frac{r \log r}{x} \qquad \text{and} \qquad
  f''(x) = \frac{x + r \log r}{x^2}.
  \]
  So $f''(x) < 0$ for $x < -r\log r$ and $f''(x) > 0$ for $x > -r\log
  r$. Note also that $f(r) = f'(r) = 0$, and $-r\log r \leq r$ as long
  as $r \geq 1/e$. By
  analyzing the convexity of $f$ (see Figure~\ref{fig:h-exp-f}), we see that $f(x_0) \geq 0$ implies
  $f(x) \geq 0$ for all $x \in [x_0, 1]$.
\end{proof}

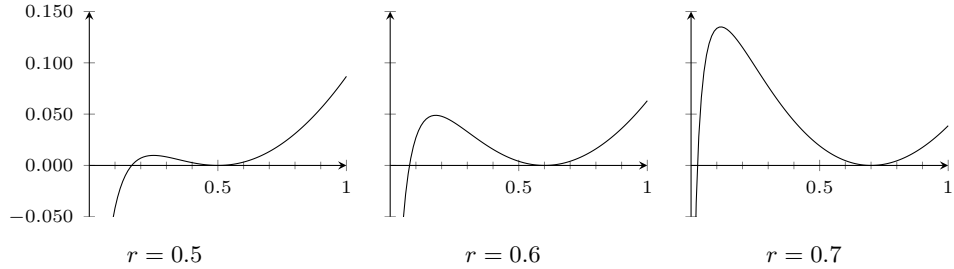
\begin{figure}
  \pgfplotsset{
    every axis/.append style={
      width=5cm,
      axis x line=middle,
      axis y line=left,
      scaled ticks=false,
      xtick={0,.5,1},
      minor x tick num = 4,
      ymin=-.05,ymax=.15,
      xmin=0,xmax=1,
      xticklabel style={font=\tiny,/pgf/number format/.cd,fixed,precision=2},
      yticklabel style={font=\tiny,/pgf/number format/.cd,fixed,fixed
        zerofill,precision=3}
    },
    every axis plot post/.append style={black, mark=none}
    }
  \begin{tikzpicture}
    \begin{axis}
      \addplot table {plot-Hsym5.dat};
    \end{axis}
    \node at (1,-.5) {\footnotesize $r=0.5$};
    \begin{axis}[xshift=4cm,yticklabels={,,}]
      \addplot[] table {plot-Hsym6.dat};
    \end{axis}
    \node at (5.5,-.5) {\footnotesize $r=0.6$};
    \begin{axis}[xshift=8cm,yticklabels={,,}]
       \addplot[] table {plot-Hsym7.dat};
     \end{axis}
     \node at (9.5,-.5) {\footnotesize $r=0.7$};
\end{tikzpicture}
  \caption{Plot of $f_r$ from (\ref{eq:h-exp-tangent-diff}) for
    various values of $r$.}
  \label{fig:h-exp-f}
\end{figure}

\subsection{Symmetry breaking}

The proof of the second part of Theorem~\ref{thm:LT-h-H} is easy. One
can fine tune the bounds as in
Section~\ref{sec:K3-brk} though we omit the analysis here.

\begin{proposition}
  Let $H$ be a nonbipartite graph. Then $\LT(H, r) < h(r)$ for all $r
  < 0.186$.
\end{proposition}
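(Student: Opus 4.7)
The plan is to exhibit a single explicit graphon that is feasible for $\LT(H, r)$ and has $\EE[h(W)] < h(r)$ for every $r < 0.186$. In analogy with the $\BIP_{a,b}$ construction used in Section~\ref{sec:K3-brk}, the natural extremal choice is $W = \BIP_{0,1}$, i.e., the graphon defined in \eqref{eq:BIP} with $a = 0$ and $b = 1$, supported on the two off-diagonal blocks $[0,1/2] \x (1/2,1]$ and $(1/2,1] \x [0,1/2]$.

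The first step is to verify feasibility. For any nonbipartite $H$, the integrand
\[
\prod_{ij \in E(H)} \BIP_{0,1}(x_i, x_j)
\]
is nonzero at a point $(x_i)_{i \in V(H)} \in [0,1]^{V(H)}$ only if, for every edge $ij \in E(H)$, the coordinates $x_i$ and $x_j$ lie on opposite halves of $[0,1]$. Declaring which vertices of $H$ map to $[0,1/2]$ and which to $(1/2,1]$ thus yields a proper $2$-coloring of $H$, and no such coloring exists when $H$ is nonbipartite. Therefore $t(H, \BIP_{0,1}) = 0 \leq r^{e(H)}$ for all $r \geq 0$, so $\BIP_{0,1}$ is feasible for $\LT(H, r)$.

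The second step is a direct entropy computation: since $h(0) = 1$ and $h(1) = 0$,
\[
\EE[h(\BIP_{0,1})] = \tfrac12 h(0) + \tfrac12 h(1) = \tfrac12.
\]
Finally, $h$ is strictly decreasing on $(0,1]$ since $h'(x) = \log x < 0$ there, and a direct numerical check at $r = 0.186$ gives $h(0.186) > \tfrac12$ (the precise threshold is the unique root of $r(1 - \log r) = \tfrac12$ in $(0,1)$). Consequently $h(r) > \tfrac12$ for every $r < 0.186$, and combining with the previous step yields
\[
\LT(H, r) \leq \EE[h(\BIP_{0,1})] = \tfrac12 < h(r),
\]
which is the claim.

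There is no real obstacle here: the argument is a one-line feasibility check using nonbipartiteness of $H$ together with a two-value entropy computation. The threshold $0.186$ is of course far from sharp; as the paper notes, it could be improved by allowing the full two-parameter family $\BIP_{a,b}$ and optimizing over $(a,b)$ as in the proof of Theorem~\ref{thm:LT-h-K3}, but the present proposition uses only the extremal choice $(a,b) = (0,1)$ for the sake of a clean statement and proof.
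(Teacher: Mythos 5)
Your proof is correct and takes exactly the same route as the paper's: exhibit $W=\BIP_{0,1}$, note $t(H,W)=0$ by nonbipartiteness, and compare $\EE[h(W)]=\tfrac12 h(0)=\tfrac12$ with $h(r)$ for $r<0.186$. You have merely filled in the details (the $2$-coloring argument for feasibility and the numerical check $h(0.186)>\tfrac12$) that the paper's one-line proof leaves implicit.
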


\begin{proof}
  The graphon $W = \BIP_{0,1}$ satisfies $t(H, W) = 0$, and $\EE[h(W)]
  = \frac12 h(0)$, which is strictly less than $h(r)$ for all $r < 0.186$.
\end{proof}

\section{Open problems} \label{sec:end}

We conclude with some open problems concerning the
variational problem for upper and lower tails.

\begin{itemize}
\item \textbf{Upper tail phase diagram.} Determine the upper tail
  replica symmetry phase diagram for non-regular $H$.
\item \textbf{Lower tail phase diagram.} Determine the lower tail
  replica symmetry phase diagram for $K_3$, and more generally for any
  non-bipartite graph $H$. In particular, determine $r_H^*$ from
  Conjecture~\ref{conj:h}. For a bipartite graph $H$, determine
  whether there is replica symmetry everywhere
  (Conjecture~\ref{conj:LT-bipartite}).
\item \textbf{Solution in the symmetry breaking
    phase.} Solve the variational
  problem $\UT$ or $\LT$ at any non-trivial point where the constant graphon is
  not a minimizer.
\end{itemize}

\medskip

\subsection*{Acknowledgments} I would like to thank Eyal Lubetzky for helpful
discussions.

\bibliographystyle{abbrv}
\bibliography{lt_ref}

\end{document}